\renewcommand{\phi}{\varphi}
\renewcommand{\epsilon}{\varepsilon}
\newcommand{\prm}{\ell}
\newcommand{\T}{\mathbb{T}}
\newcommand{\R}{\mathbb{R}}
\newcommand{\C}{\mathbb{C}}
\newcommand{\Z}{\mathbb{Z}}
\DeclareMathOperator{\im}{im}
\DeclareMathOperator{\lcm}{lcm}
\DeclareMathOperator{\ord}{ord}
\DeclareMathOperator{\SU}{SU}
\DeclareMathOperator{\Tr}{Tr}
\newtheorem{thm}{Theorem}[section]
\newtheorem{lem}[thm]{Lemma}
\newtheorem{prop}[thm]{Proposition}
\newtheorem*{cor}{Corollary}
\title[Graphical cyclic supercharacters for composite moduli]
	{Graphical cyclic supercharacters for composite moduli}
\author[B. Lutz]{Bob Lutz}
\address{Department of Mathematics,
	University of Michigan,
	2074 East Hall,
	530 Church Street,
	Ann Arbor, MI 48109,
	USA}
\email{boblutz@umich.edu}
\urladdr{\url{http://www-personal.umich.edu/~boblutz}}
\begin{document}
\maketitle

\begin{abstract}
Recent work has introduced the study of graphical properties of cyclic supercharacters, functions $\Z/n\Z\to \C$ whose values are exponential sums with close connections to Gauss sums and Gaussian periods. Plots of these functions exhibit striking features, some of which have been previously explained when the modulus $n$ is a power of an odd prime. After reviewing this material, we initiate the graphical study of images of cyclic supercharacters in the case of composite $n$.
\end{abstract}

\section{Introduction}

For a positive integer $n$ and a unit $\omega$ mod $n$ of order $d$, the associated \emph{cyclic supercharacter mod $n$} is the function $\sigma_\omega:\Z/n\Z\to \C$ given by
\begin{equation*}
	\sigma_\omega(y) = \sum_{j=1}^d e\left(\frac{\omega^j y}{n}\right),
\end{equation*}
where $e(\theta):=\exp(2\pi i \theta)$ for all real $\theta$. Gauss studied the values of cyclic supercharacters mod primes $p>2$, called \emph{Gaussian periods}, as they relate to the problem of drawing regular polygons with compass and straight-edge. These values are modernly called \emph{Gaussian periods} and have appeared in many contexts, including the construction of difference sets and the optimized AKS algorithm of Lenstra and Pomerance \cite{baumert1971,lenstra2002}. A more detailed account of the history of Gaussian periods with references can be found in \cite{hyde2015}, although our notation differs from theirs.

\begin{figure}[ht]
	\begin{subfigure}[b]{0.3\textwidth}
		\includegraphics[width=\textwidth]{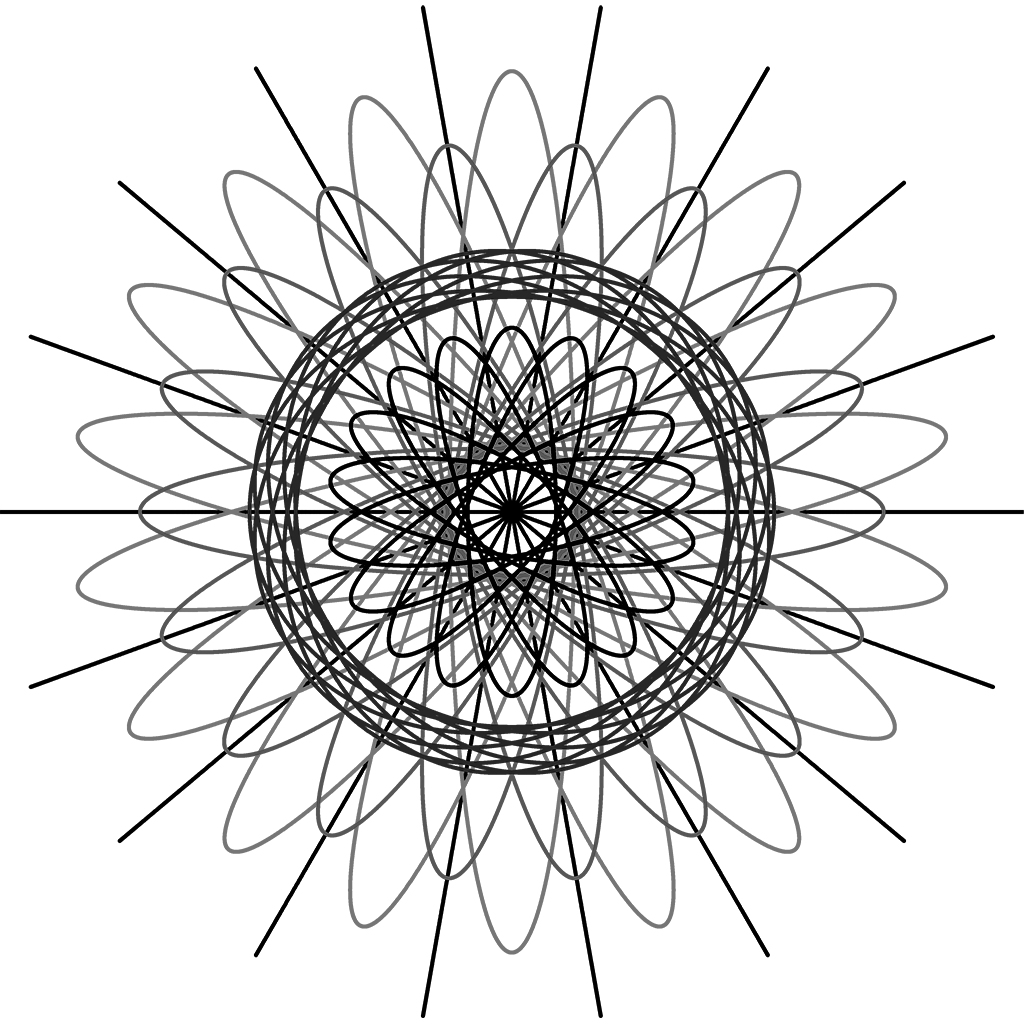}
		\caption{\scriptsize $n=478125$, $\omega=3124$}
	\end{subfigure}
	\quad
	\begin{subfigure}[b]{0.3\textwidth}
		\includegraphics[width=\textwidth]{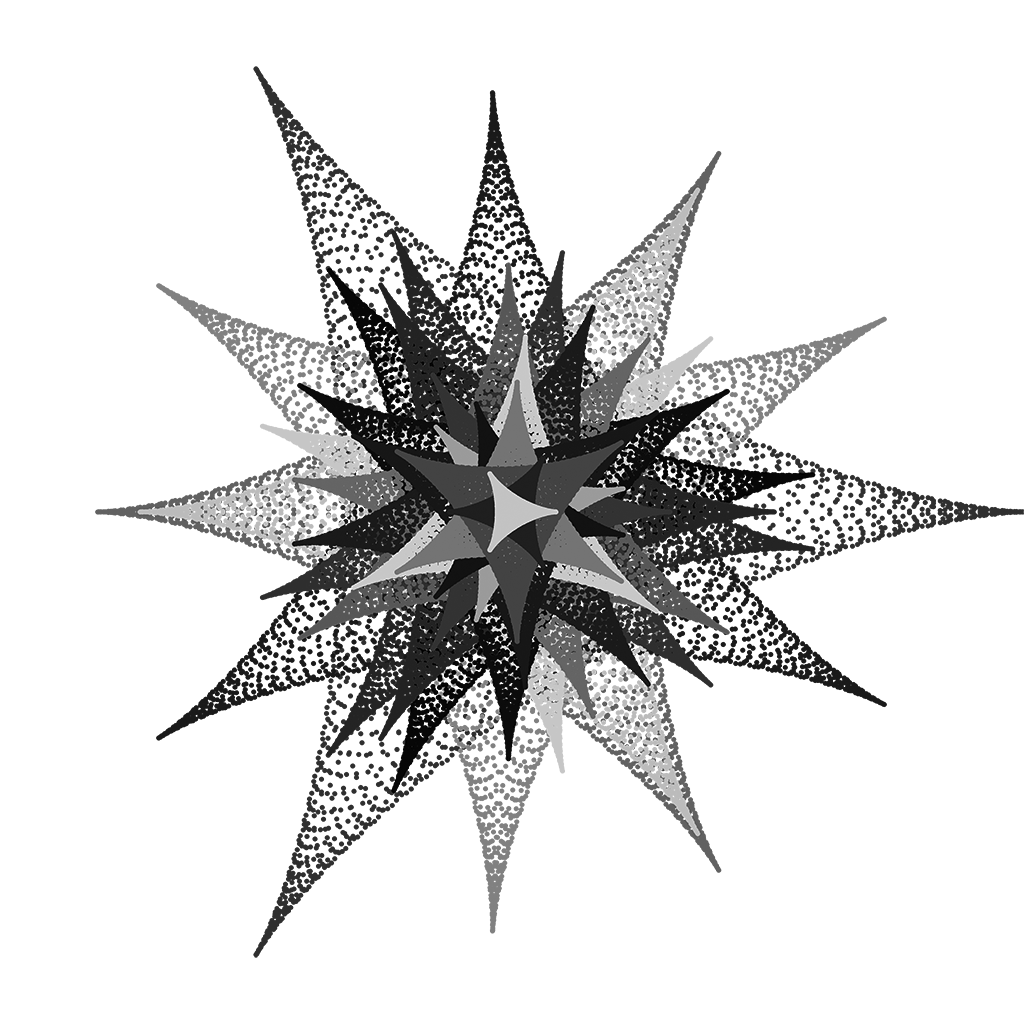}
		\caption{\scriptsize $n=551905$, $\omega=20719$}
	\end{subfigure}
	\quad
	\begin{subfigure}[b]{0.3\textwidth}
		\includegraphics[width=\textwidth]{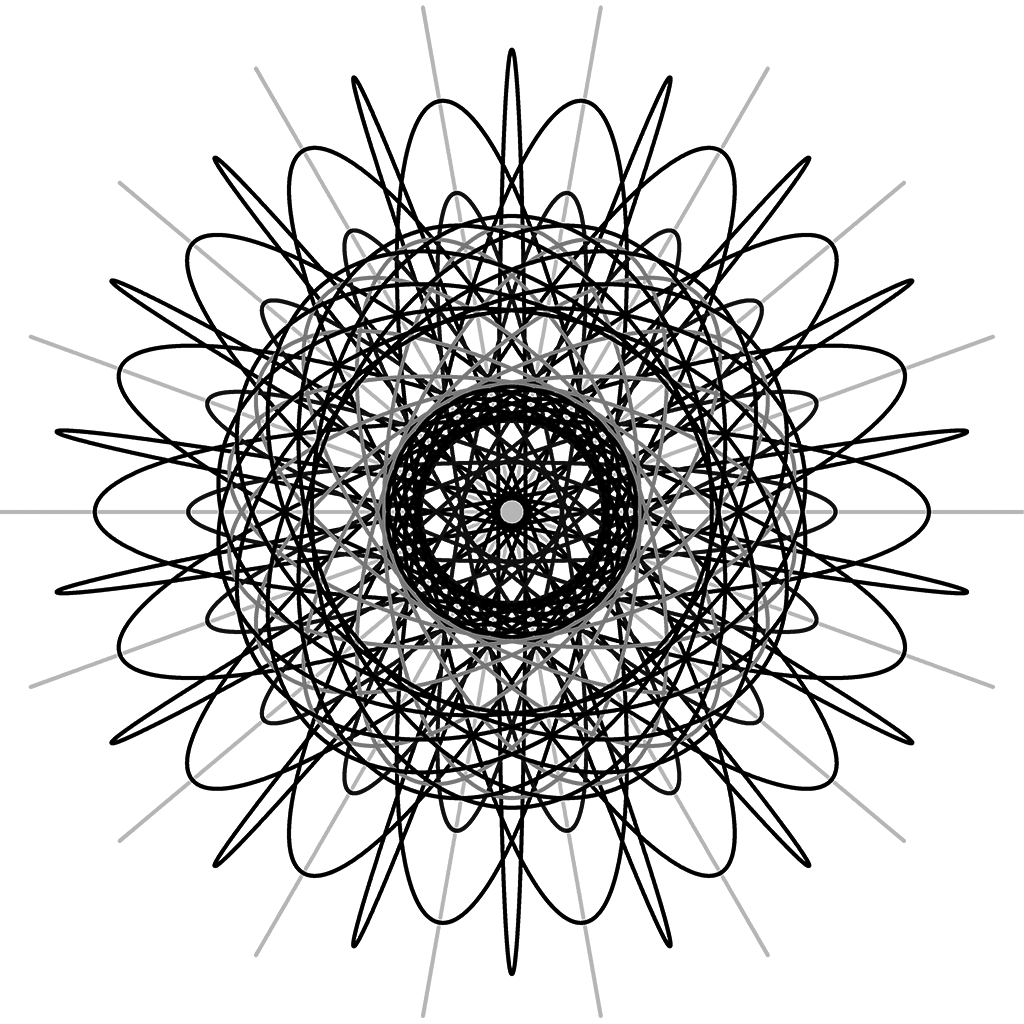}
		\caption{\scriptsize $n=455175$, $\omega=107218$}
	\end{subfigure}
	\caption{Realized as complex plots, the images of cyclic supercharacters $\sigma_\omega$ mod $n$ reveal themselves in surprising ways.}
	\label{prettyfig}
\end{figure}

Kummer introduced analogous sums, values of cyclic supercharacters $\sigma_\omega$ mod $n$, for composite $n$. These sums have been studied in their own right and linked to certain difference sets \cite{evans1981,evans1982,lehmer1981}. While individual values can be difficult to analyze, recent work has revealed striking and accessible patterns in these sums when viewed together as the image $\im(\sigma_\omega)$ of a cyclic supercharacter for a fixed modulus $n$ and generator $\omega$. Figure \ref{prettyfig} offers a small gallery of such images as complex plots.

For $n$ a power of an odd prime, much of this graphical behavior has been described previously in terms of certain Laurent polynomials on high-dimensional tori \cite{duke2015,hyde2015}. We review this material briefly in Section \ref{ppmodsec}. Comparatively little, however, has been done to study the analogous properties of cyclic supercharacters mod non-prime-power $n$. With this note, we aim to explain concisely and systematically many of the patterns yet observed in the images of these supercharacters.

For convenience, we will frequently consider cyclic supercharacters $\sigma_\omega$ as periodic functions on $\Z$ with period $n$, and treat integers tacitly as residues whenever it does not affect the statement. The functions $\sigma_\omega$ are supercharacters in the sense of \cite{diaconis2008}, but we do not adopt this perspective here. Ramanujan sums, Heilbronn sums, and generalized Kloosterman sums can also be viewed as values of supercharacters \cite{brumbaugh2014,burkhardt2015,garcia2015}. For cyclic supercharacters, the motivated reader can find the details of this approach in \cite{duke2015}.

\section{Prime-power moduli}
\label{ppmodsec}

In this section, we consider cyclic supercharacters mod $p^a$ for an odd prime $p$ and positive integer $a$. There is a description of the images of such supercharacters in terms of the images of certain Laurent polynomials, which we record below. Throughout, we write $\phi$ for the totient function, $\T$ for the unit circle in $\C$, and $\Phi_d(x)$ for the $d$th cyclotomic polynomial in $x$. Recall that $\Phi_d(x)$ is monic and has all integer coefficients. The following result is due to \cite{duke2015}.

\begin{thm}
	\label{dukethm}
	Fix a positive integer $d$. If $p\equiv 1\pmod d$ is an odd prime and $\omega$ is a unit of order $d$ mod $p^a$ for some positive integer $a$, then $\im \sigma_\omega$ is contained in the image of the function $g_d:\T^{\phi(d)}\to \C$ given by
	\begin{equation*}
		g_d(z_1,\ldots,z_{\phi(d)})=\sum_{k=1}^{d}\prod_{j=1}^{\phi(d)} z_j^{c_{j,k}},
	\end{equation*}
	where the exponents $c_{j,k}$ are integers determined by the relations
	\begin{equation*}
		x^k \equiv \sum_{j=1}^{\phi(d)} c_{j,k}x^{j-1} \pmod{\Phi_d(x)}.
	\end{equation*}
	Moreover, every open disk in the image of $g_d$ contains points in the images of $\sigma_\omega$ for sufficiently large $p^a$ subject to $p\equiv 1\pmod d$.
\end{thm}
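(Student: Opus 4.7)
The plan splits into a containment argument and an equidistribution argument for the density half. For containment, fix $y\in\Z/p^a\Z$ and set $z_j := e(\omega^{j-1}y/p^a)$ for $j=1,\ldots,\phi(d)$. The key input is the identity $\Phi_d(\omega)\equiv 0\pmod{p^a}$: the hypothesis $p\equiv 1\pmod d$ forces $p>d$, hence $p\nmid d$, so $x^d-1$ is separable mod $p$ and the cyclotomic factors $\Phi_e$ with $e\mid d$ are pairwise coprime mod $p$. Since $\omega$ has order exactly $d$ mod $p^a$, reducing mod $p$ makes it a simple root of $\Phi_d$ there, and Hensel's lemma lifts this uniquely to a root of $\Phi_d$ mod $p^a$, which must be $\omega$ itself. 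Substituting $x=\omega$ into $x^k\equiv\sum_j c_{j,k}x^{j-1}\pmod{\Phi_d(x)}$ yields $\omega^k\equiv\sum_j c_{j,k}\omega^{j-1}\pmod{p^a}$, so expanding $g_d$ at the tuple $(z_j)$ gives
\begin{equation*}
  g_d(z_1,\ldots,z_{\phi(d)}) = \sum_{k=1}^d e\!\left(\frac{y\sum_j c_{j,k}\omega^{j-1}}{p^a}\right) = \sum_{k=1}^d e\!\left(\frac{\omega^k y}{p^a}\right) = \sigma_\omega(y).
\end{equation*}

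For the density statement, I would show that the map $\Phi\colon\Z/p^a\Z\to\T^{\phi(d)}$, $y\mapsto (e(\omega^{j-1}y/p^a))_j$, has image that equidistributes in $\T^{\phi(d)}$ as $p^a\to\infty$ subject to $p\equiv 1\pmod d$. By Weyl's criterion it suffices, for each fixed nonzero $(m_1,\ldots,m_{\phi(d)})\in\Z^{\phi(d)}$, to show that
\begin{equation*}
  \frac{1}{p^a}\sum_{y=0}^{p^a-1} e\!\left(\frac{yM}{p^a}\right), \qquad M := \sum_j m_j\omega^{j-1},
\end{equation*}
tends to $0$; this sum equals $1$ if $M\equiv 0\pmod{p^a}$ and $0$ otherwise, so the task is to rule out $M\equiv 0$ for large $p^a$. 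Setting $P(x):=\sum_j m_j x^{j-1}$, we have $\deg P<\phi(d)=\deg\Phi_d$, so $P$ is coprime to the irreducible $\Phi_d$ in $\Q[x]$; clearing denominators in Bezout's identity yields $A,B\in\Z[x]$ and $N\in\Z_{>0}$ with $A(x)P(x)+B(x)\Phi_d(x)=N$. Evaluating at $x=\omega$ and reducing mod $p^a$ gives $A(\omega)M\equiv N\pmod{p^a}$, so $M\equiv 0\pmod{p^a}$ would force $p^a\mid N$, which fails once $p^a>N$.

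Given equidistribution, for any open disk $D\subseteq\im g_d$ I would pick $v\in\T^{\phi(d)}$ with $g_d(v)\in D$; continuity of $g_d$ provides an open $V\ni v$ with $g_d(V)\subseteq D$, and equidistribution then produces some $y$ with $\Phi(y)\in V$, hence $\sigma_\omega(y)=g_d(\Phi(y))\in D$. The hard part will be the Bezout step in the equidistribution argument, since it is what certifies that the obstructive Weyl sum vanishes outright for $p^a$ beyond a threshold depending on the $m_j$; by contrast, the containment reduces to direct substitution once $\Phi_d(\omega)\equiv 0\pmod{p^a}$ is in hand, and the passage from equidistribution to density is standard continuity.
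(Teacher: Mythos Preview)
The paper does not prove this theorem; it is stated there as a result due to \cite{duke2015} and quoted without proof, so there is no in-paper argument to compare against.

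That said, your proposal is correct and is in fact the natural proof. A couple of minor remarks: in the containment step you implicitly use that $\omega$ has exact order $d$ mod $p$ (not just mod $p^a$); this follows because $\gcd(d,p)=1$ and the kernel of the reduction $(\Z/p^a\Z)^\times\to(\Z/p\Z)^\times$ has $p$-power order, so no order is lost under reduction. With this in hand, your factorisation argument shows $\Phi_e(\omega)$ is a unit mod $p^a$ for every proper divisor $e\mid d$, whence $\Phi_d(\omega)\equiv 0\pmod{p^a}$ directly from $\prod_{e\mid d}\Phi_e(\omega)=\omega^d-1\equiv 0$; you do not actually need Hensel. In the density step, your phrasing ``equidistributes'' is slightly informal---what you are proving is that the normalised counting measures on the finite images $\Phi(\Z/p^a\Z)\subset\T^{\phi(d)}$ converge weakly to Haar measure as $p^a\to\infty$---but the Weyl argument you give (each nontrivial character sum is identically $0$ once $p^a$ exceeds the resultant $N$ of $P$ and $\Phi_d$) establishes exactly this, and the continuity step then yields the stated conclusion about open disks.
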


For $k=1,2,\ldots$ let $A_k\subset \C$. If there exists a set $B\subset \C$ such that $A_k\subset B$ for all $k$ and, for each nonempty open set $U\subset B$, a positive integer $k_U$ for which $k>k_U$ implies that $U\cap A_{k_U}$ is nonempty, then we say that the sets $A_k$ \emph{fill out} $B$ as $k\to\infty$. In these terms, we can rephrase the last statement of Theorem \ref{dukethm} by saying that the images $\im(\sigma_\omega)$ fill out $\im g_d$ as $p^a\to\infty$ subject to $p\equiv 1\pmod{d}$.

\begin{figure}[ht]
	\begin{subfigure}[b]{0.21\textwidth}
		\includegraphics[width=\textwidth]{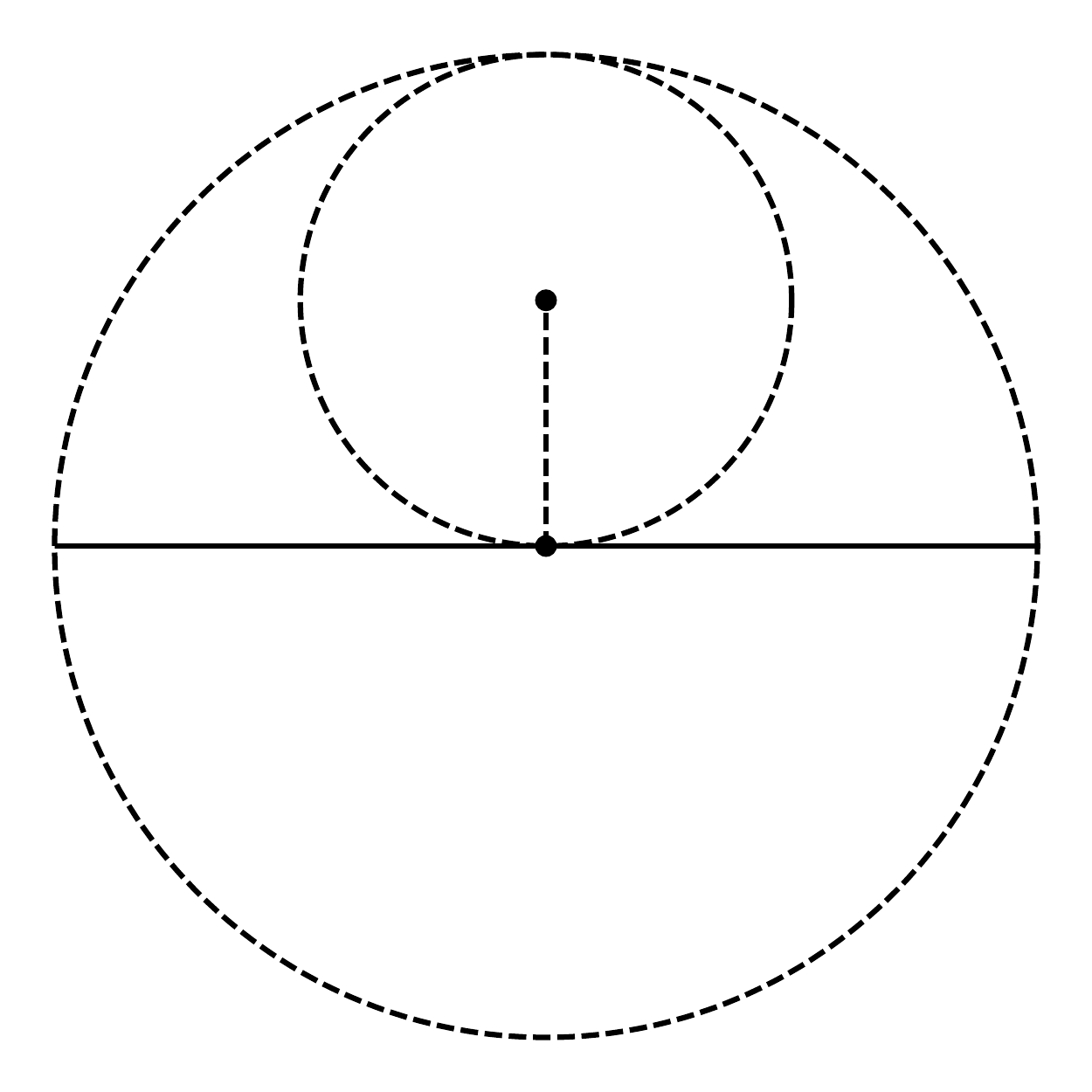}
		\caption{\scriptsize Tusi couple}
	\end{subfigure}
	\quad
	\begin{subfigure}[b]{0.21\textwidth}
		\includegraphics[width=\textwidth]{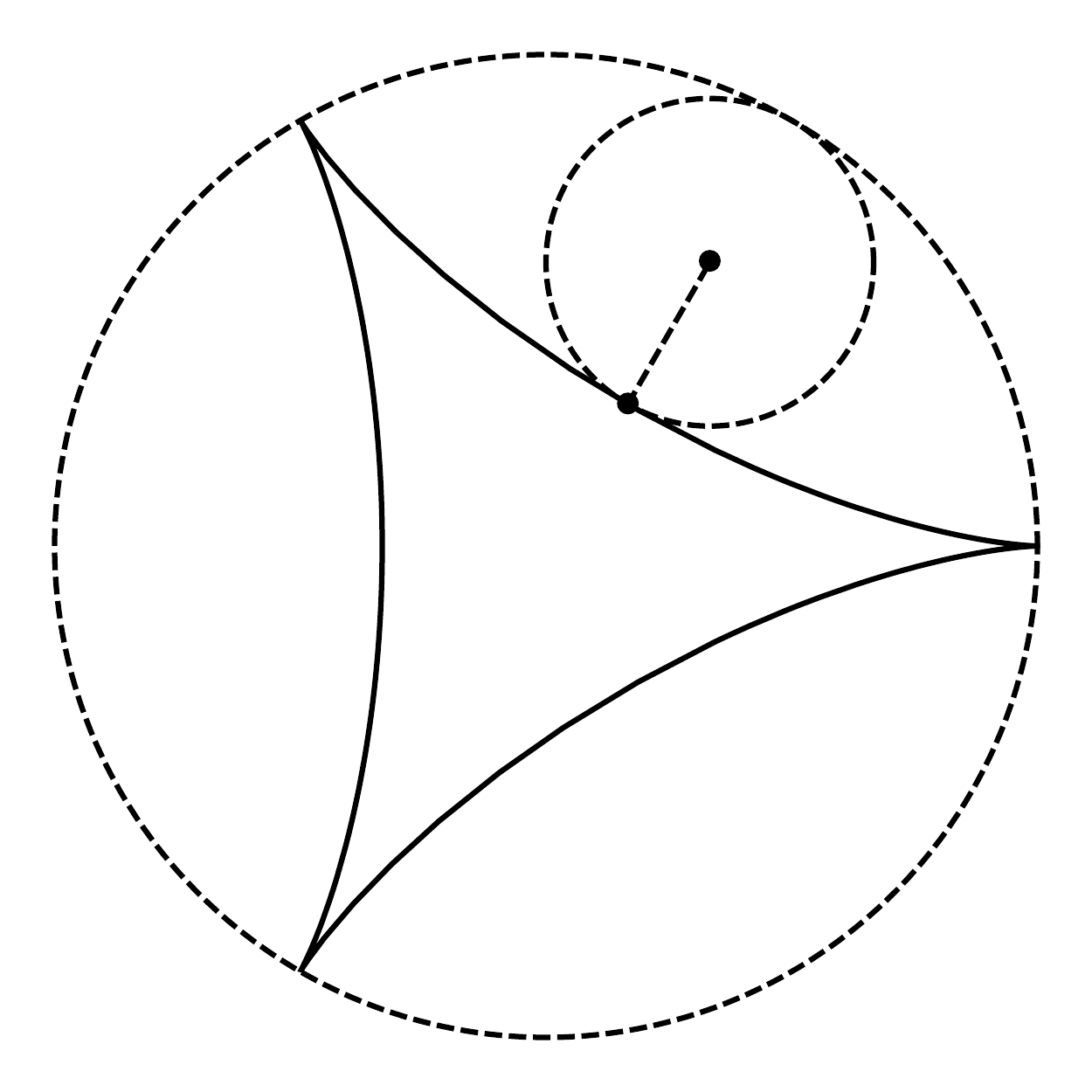}
		\caption{\scriptsize Deltoid}
	\end{subfigure}
	\quad
	\begin{subfigure}[b]{0.21\textwidth}
		\includegraphics[width=\textwidth]{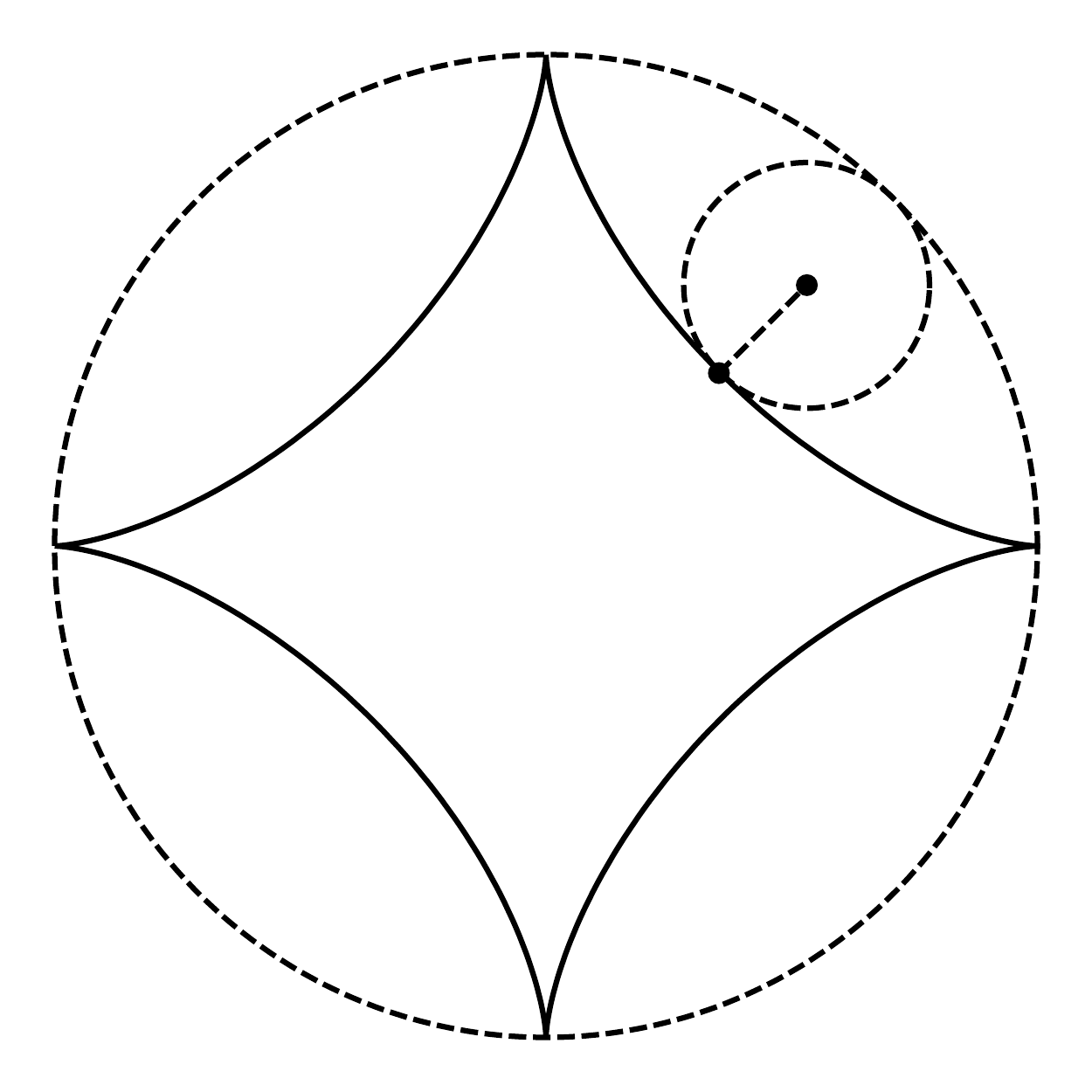}
		\caption{\scriptsize Astroid}
	\end{subfigure}
	\quad
	\begin{subfigure}[b]{0.21\textwidth}
		\includegraphics[width=\textwidth]{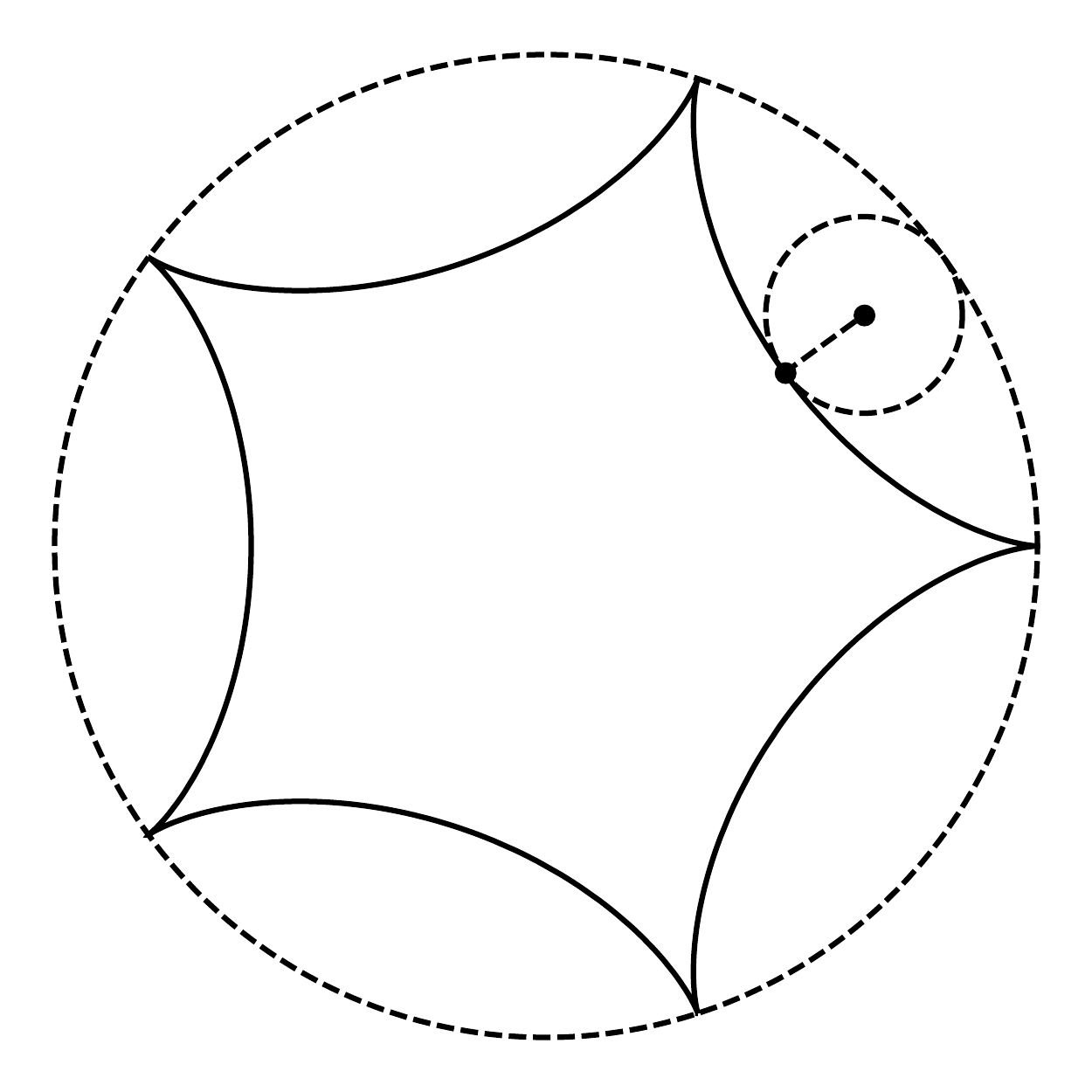}
		\caption{\scriptsize 5-hypocycloid}
	\end{subfigure}
	\caption{A circle of radius 1 traces out hypocycloids as it rolls within circles of radii $2$, $3$, $4$, and $5$.}
	\label{rollfig}
\end{figure}

The clearest behavior occurs when, in the notation of Theorem \ref{dukethm}, $d$ is a positive power of an odd prime. Recall that a \emph{hypocycloid} is a planar curve obtained by tracing a fixed point on a circle as it rolls within a larger circle. This construction, illustrated in Figure \ref{rollfig}, produces a simple closed curve if the smaller radius divides the larger; the number of cusps is the ratio of the larger radius to the smaller. For all integers $k\geq 2$, let $H_k\subset\C$ denote the compact, simply-connected set whose boundary is the $k$-cusped hypocycloid centered at 0 with a cusp at $k$. Let $P_k$ denote the convex hull of $H_k$, whose boundary is the regular $k$-gon centered at 0 with a vertex at $k$.

\begin{figure}[ht]
	\begin{subfigure}[b]{0.3\textwidth}
		\includegraphics[width=\textwidth]{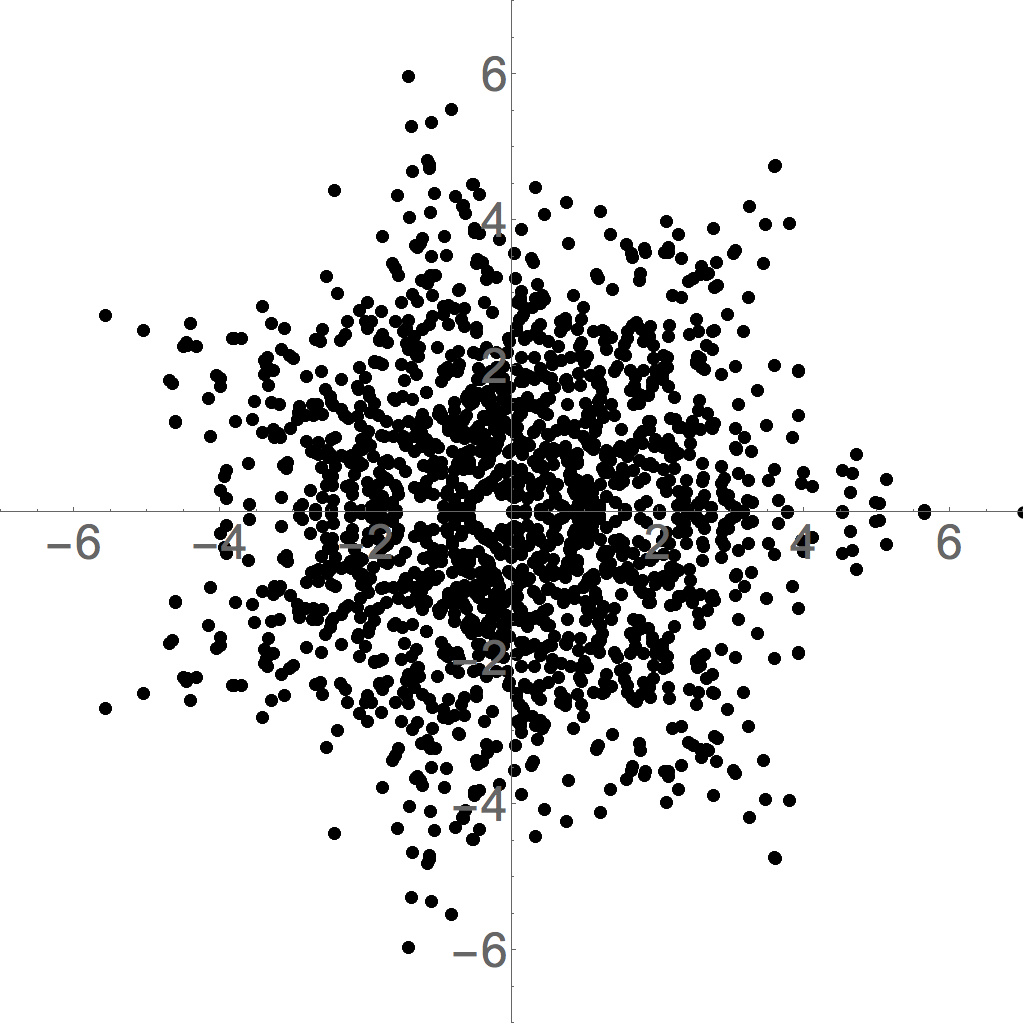}
		\caption{\scriptsize $(113^2,129)$}
	\end{subfigure}
	\quad
	\begin{subfigure}[b]{0.3\textwidth}
		\includegraphics[width=\textwidth]{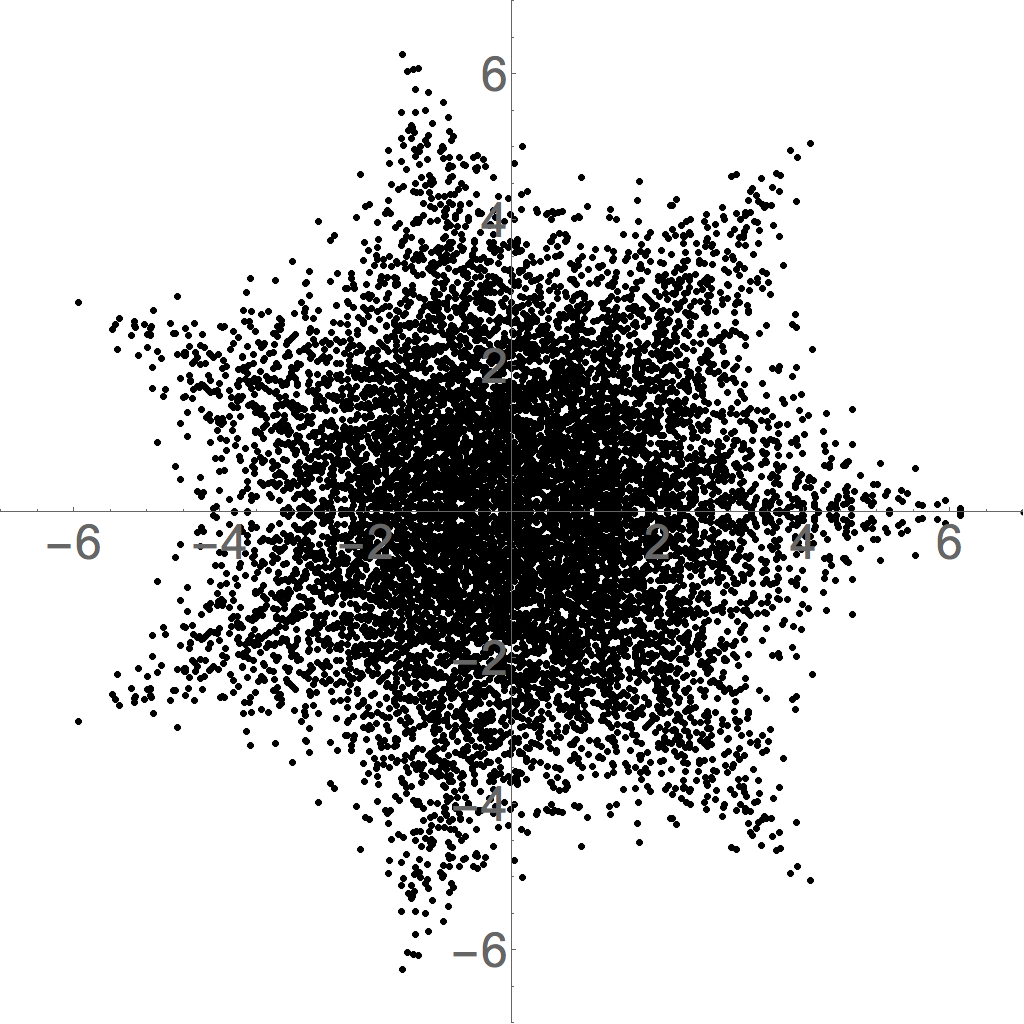}
		\caption{\scriptsize $(43^3,3623)$}
	\end{subfigure}
	\quad
	\begin{subfigure}[b]{0.3\textwidth}
		\includegraphics[width=\textwidth]{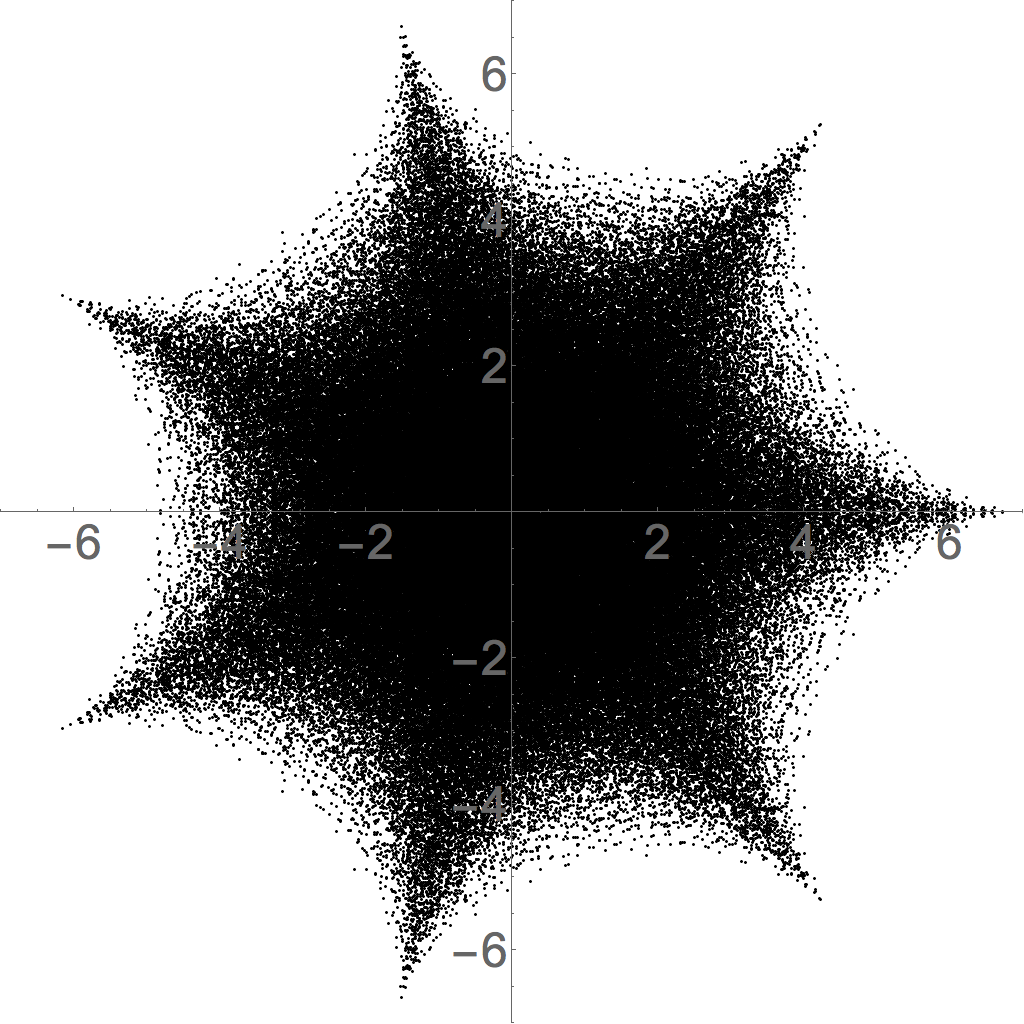}
		\caption{\scriptsize $(1289^2,341010)$}
	\end{subfigure}
	\caption{For the given pairs $(n,\omega)$, the images of the cyclic supercharacters $\sigma_\omega$ mod $n$ are on their way to filling out $H_7$.}
	\label{hypofig}
\end{figure}

If $\prm$ is an odd prime, then $\phi(\prm)=\prm-1$ and $\Phi_\prm(x)=1+x+x^2+\cdots+x^{\prm-1}$, so
\begin{equation*}g_\prm(z_1,\ldots,z_{\prm-1})=z_1+z_2+\cdots+z_{\prm-1}+\frac{1}{z_1z_2\cdots z_{\prm-1}}.\end{equation*}
The image of $g_\prm$ is seen to be $\Tr(\SU_\prm(\C))$, which is precisely $H_\prm$ \cite[Theorem 3.2.3]{cooper2007}. More is true, but we require additional notation to write it succinctly. In Figure \ref{hypofig}, several terms of a sequence filling out $H_7$ are illustrated.

For nonempty subsets $A$ and $B$ of $\C$, make the definitions
\begin{equation}
	\begin{aligned}
		A\oplus B &= \{a+b : (a,b)\in A\times B\}\\
		A\otimes B &=\{ab : (a,b)\in A\times B\}.
	\end{aligned}
	\label{eqmink}
\end{equation}
The sets in \eqref{eqmink} are sometimes called the \emph{Minkowski sum} and \emph{Minkowski product}, respectively, of $A$ and $B$, and the operations $\oplus$ and $\otimes$ are called \emph{Minkowski addition} and \emph{Minkowski multiplication}. The corresponding $n$-ary operations are defined by induction; for convenience, we write $A\oplus \cdots \oplus A$ as $A^{\oplus k}$, where $k$ is the number of summands. While Minkowski addition and multiplication are both commutative and have identity elements, neither distributes over the other or has a well-defined inverse operation. Minkowski addition has been studied extensively in Euclidean space and is well understood, at least compared to Minkowski multiplication, which is an active subject of research in pure and applied settings \cite{farouki2000,farouki2001,li2015}.

\begin{figure}[ht]
	\begin{subfigure}[b]{0.3\textwidth}
		\includegraphics[width=\textwidth]{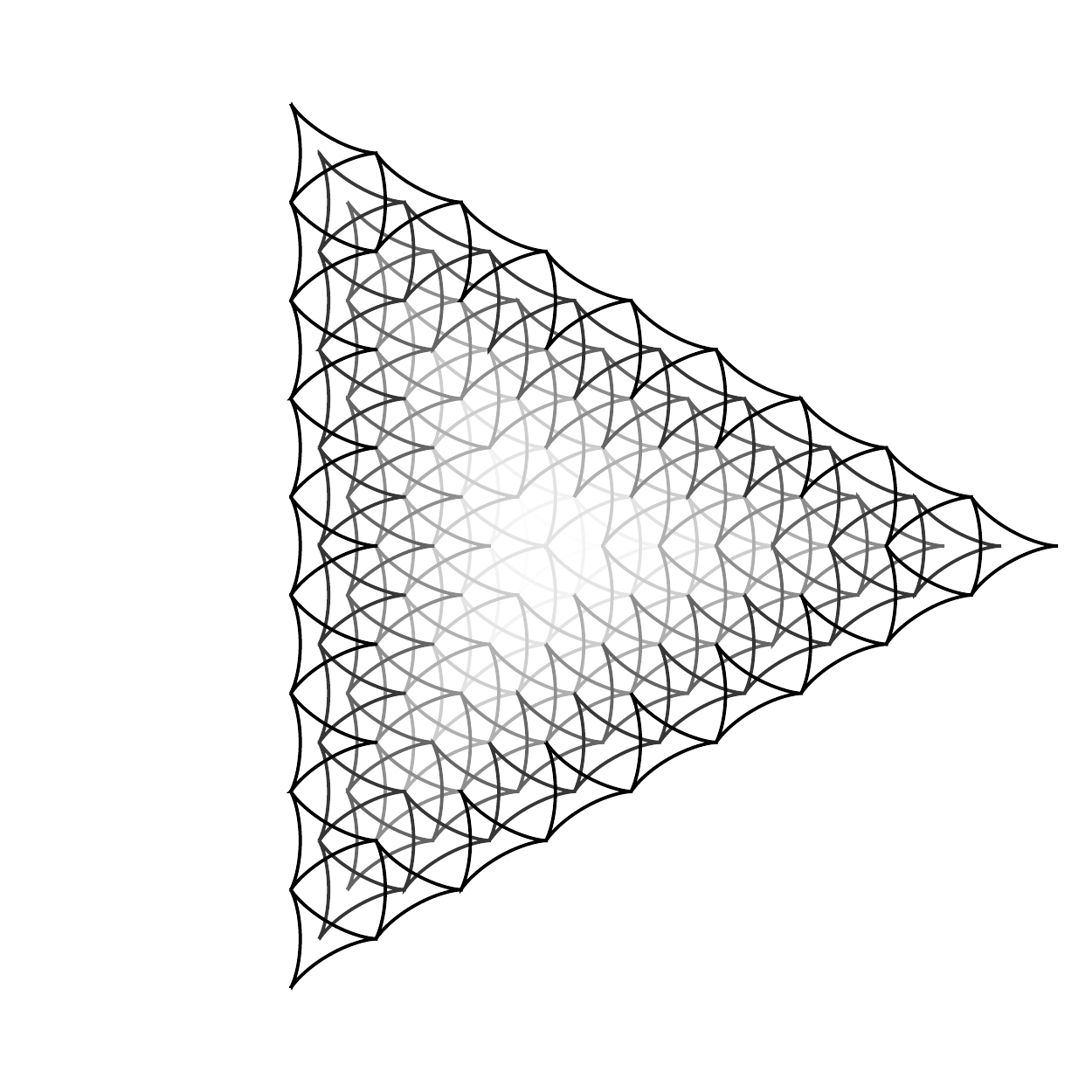}
		\caption{\scriptsize $\prm=b=3$}
	\end{subfigure}
	\quad
	\begin{subfigure}[b]{0.3\textwidth}
		\includegraphics[width=\textwidth]{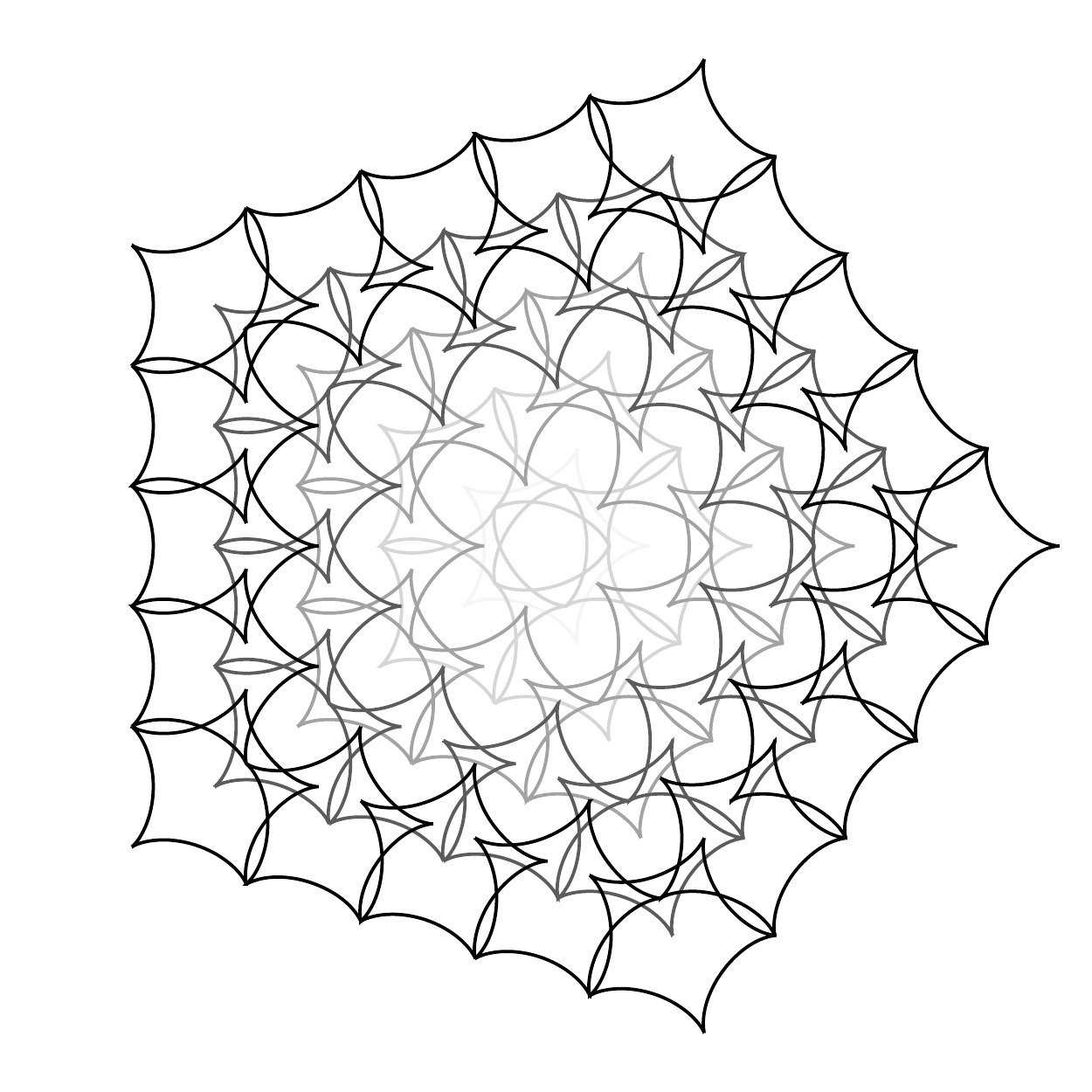}
		\caption{\scriptsize $\prm=5$, $b=2$}
	\end{subfigure}
	\quad
	\begin{subfigure}[b]{0.3\textwidth}
		\includegraphics[width=\textwidth]{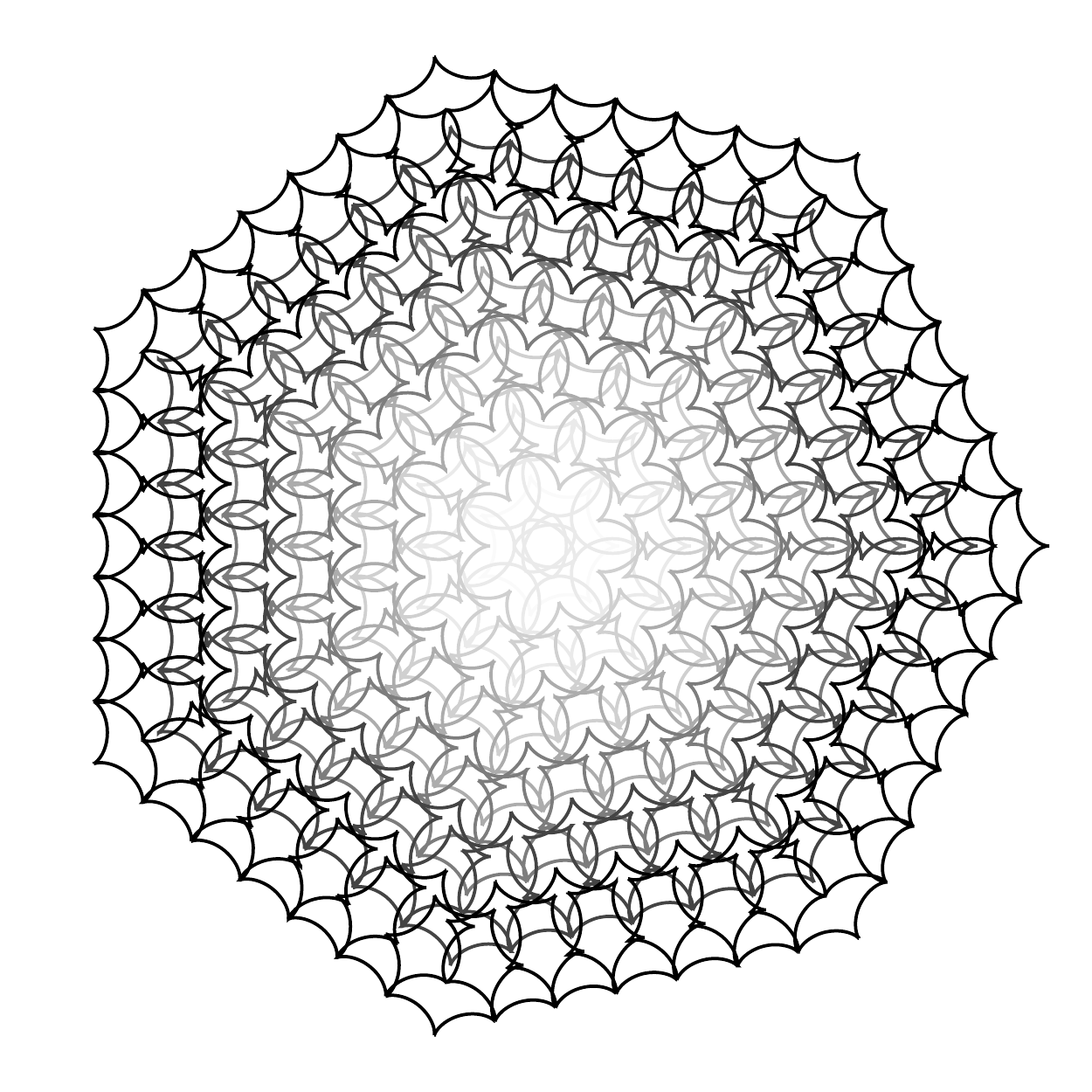}
		\caption{\scriptsize $\prm=7$, $b=2$}
	\end{subfigure}
	\caption{The outer boundaries in the figures form the boundaries of $H_\prm^{\oplus \prm^{b-1}}$.}
	\label{minksumfig}
\end{figure}

For the moment, we are concerned with Minkowski addition. If $\prm^b$ is a positive power of an odd prime $\prm$, then it can be shown that
\begin{equation}
	\im(g_{\prm^b})=H_\prm^{\oplus \prm^{b-1}}.
	\label{sumseteq}
\end{equation}
Several of these sets are illustrated in Figure \ref{minksumfig}. The reader might notice that as $\prm^b$ increases, the figures begin to resemble regular polygons. Indeed, it follows from a corollary to the Shapley--Folkman theorem in \cite{starr1969} that as $k\to\infty$, the scaled Minkowski sums $\frac{1}{k}H_\prm^{\oplus k}$ fill out $P_\prm$. To close the section, we record the corresponding implication for cyclic supercharacters. The proof is an application of the preceding discussion to Theorem \ref{dukethm}.

\begin{prop}
	Fix an odd prime $\prm$. For $k=1,2,\ldots$ let $b_k$ be a positive integer, $p_k>\prm$ an odd prime with $\prm^{b_k} | \phi(p_k^{a_k})$, and $\omega_k$ a unit mod $p_k^{a_k}$ of order $\prm^{b_k}$. As $k\to\infty$, if $b_k\to\infty$, then the scaled images $\prm^{1-b_k}\im( \sigma_{\omega_k})$ fill out $P_\prm$.
	\label{polyprop}
\end{prop}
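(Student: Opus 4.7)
The plan is to combine three ingredients: the containment from Theorem~\ref{dukethm} and \eqref{sumseteq}, namely $\im\sigma_{\omega_k}\subset\im g_{\prm^{b_k}}=H_\prm^{\oplus\prm^{b_k-1}}$; Starr's corollary to Shapley--Folkman, which forces $\frac{1}{\prm^{b_k-1}}H_\prm^{\oplus\prm^{b_k-1}}$ to converge to $P_\prm$ in Hausdorff distance as $b_k\to\infty$; and the density half of Theorem~\ref{dukethm}, saying that $\im\sigma_\omega$ is asymptotically dense in $\im g_d$ as $p^a\to\infty$ with $d$ fixed.

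First I would verify that Theorem~\ref{dukethm} applies with $d=\prm^{b_k}$. Because $\prm^{b_k}\mid\phi(p_k^{a_k})=p_k^{a_k-1}(p_k-1)$ and $p_k>\prm$ forces $\gcd(\prm,p_k)=1$, one has $\prm^{b_k}\mid p_k-1$, i.e.\ $p_k\equiv 1\pmod{\prm^{b_k}}$. Hence $\im\sigma_{\omega_k}\subset H_\prm^{\oplus\prm^{b_k-1}}$, and each element of $\prm^{1-b_k}\im\sigma_{\omega_k}$ is an equally weighted average of $\prm^{b_k-1}$ points of $H_\prm\subset P_\prm$; convexity of $P_\prm$ then yields the containment $\prm^{1-b_k}\im\sigma_{\omega_k}\subset P_\prm$ that the ``fill out'' definition requires of its ambient set.

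For density, fix a nonempty open $U\subset P_\prm$ and choose a closed disk $\overline{B(z_0,r)}\subset U\cap\mathrm{int}(P_\prm)$. Starr's quantitative form of Shapley--Folkman, applied with the planar compact set $H_\prm$, gives $d_H\bigl(\tfrac{1}{n}H_\prm^{\oplus n},P_\prm\bigr)=O(1/n)$. Hence for $k$ large enough the average $\frac{1}{\prm^{b_k-1}}H_\prm^{\oplus\prm^{b_k-1}}$ meets $B(z_0,r/2)$ and, being two-dimensional, contains an open disk $D_k\subset B(z_0,r)\subset U$. Rescaling by $\prm^{b_k-1}$ produces an open disk inside $\im g_{\prm^{b_k}}$, and the density half of Theorem~\ref{dukethm}, applied at the fixed modulus $d=\prm^{b_k}$, puts a point of $\im\sigma_{\omega_k}$ in this rescaled disk once $p_k^{a_k}$ exceeds a threshold depending on $b_k$ and $D_k$. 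Scaling back delivers a point of $\prm^{1-b_k}\im\sigma_{\omega_k}$ in $D_k\subset U$.

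The main obstacle is the uniformity of this last threshold. Theorem~\ref{dukethm} keeps $d$ fixed and lets $p^a\to\infty$, but in our setting $d=\prm^{b_k}$ grows with $k$. The divisibility hypothesis $\prm^{b_k}\mid p_k-1$ does force $p_k^{a_k}\to\infty$, yet a priori it need not outpace the threshold that Theorem~\ref{dukethm} demands for the shrinking target $D_k$. The cleanest fix is to equip Theorem~\ref{dukethm} with a quantitative refinement in which the required bound on $p^a$ grows polynomially in $d$; the built-in lower bound $p_k^{a_k}\geq\prm^{b_k}$ coming from the hypothesis then suffices, and the argument closes uniformly.
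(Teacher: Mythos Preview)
Your approach is exactly the one the paper indicates: its entire proof is the sentence ``The proof is an application of the preceding discussion to Theorem~\ref{dukethm},'' meaning precisely the combination of the containment $\im\sigma_{\omega_k}\subset\im g_{\prm^{b_k}}=H_\prm^{\oplus\prm^{b_k-1}}$, Starr's corollary to Shapley--Folkman, and the density assertion of Theorem~\ref{dukethm}. Your verification that $p_k\equiv 1\pmod{\prm^{b_k}}$ and the containment in $P_\prm$ via convexity are both fine.

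The uniformity issue you flag is genuine and is glossed over by the paper's one-line proof as well. Theorem~\ref{dukethm} is stated for a \emph{fixed} $d$, so the threshold on $p^a$ it provides for hitting a given disk in $\im g_d$ may in principle grow faster in $d=\prm^{b_k}$ than the automatic lower bound $p_k>\prm^{b_k}$ supplied by the hypothesis. The paper does not address this; it simply composes two ``fill out'' statements without checking that the rates are compatible. Your suggested remedy---a quantitative form of Theorem~\ref{dukethm} with polynomial dependence on $d$---is the natural fix, and the Weyl-type equidistribution argument underlying \cite{duke2015} can indeed be made effective, but strictly speaking that strengthening is not supplied by the paper. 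So your write-up is at least as complete as the paper's, and more honest about where the work lies. One small point: your claim that the scaled sumset ``being two-dimensional, contains an open disk $D_k\subset B(z_0,r)$'' deserves a word of justification; the cleanest route is to note that $\frac{1}{n}H_\prm^{\oplus n}\supset H_\prm$ for every $n$ (take all summands equal), so each scaled sumset already has nonempty interior, and then use Hausdorff closeness to $P_\prm$ together with the fact that the sumsets are nested increasing in $n$ to place an interior point near $z_0$.
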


\section{Composite moduli}

We turn our attention now to cyclic supercharacters whose moduli are not a power of a prime. Let $a$ and $b$ be integers. For a unit $\omega$ mod $a$, we denote by $\ord(\omega)$ the (multiplicative) order of $\omega$. Unless indicated otherwise, $(a,b)$ will denote the GCD of $a$ and $b$. If $b|a$, then unless necessary, we will not distinguish between $\omega$ and its image under the reduction map $\Z/a\Z\to \Z/b\Z$. When we wish to emphasize the change in modulus, we shall write the residue of $\omega$ mod $b$ as $\omega_b$.

\subsection{General behavior}

We recall some elementary geometric notions. A set $A\subset \C$ is said to have \emph{$k$-fold dihedral symmetry} if it is invariant under the action on $\C$ of the dihedral group of order $2k$ by complex conjugation and rotation by $2\pi/k$ about the origin. The intersection of all supersets of $A$ having $k$-fold dihedral symmetry is called the \emph{$k$-fold dihedral closure} of $A$. Equivalently, this is the union of the orbits of all points in $A$. If $A$ is closed under complex conjugation, then its $k$-fold dihedral closure is
\begin{equation}
	\{e(j/k): j=1,\ldots,k\}\otimes A.
	\label{diheq}
\end{equation}
Proposition \ref{symprop1} below is an extension of \cite[Proposition 3.1]{duke2015}. Proposition \ref{symprop2} is a useful observation in the vein of Section \ref{ppmodsec}.

\begin{prop}
	Let $\sigma_\omega$ be a cyclic supercharacter mod $n$, and write $k=(\omega-1,n)$.
	\begin{enumerate}[label={(\alph*)},ref={\thethm(\alph*)}]
		\item The $k$-fold dihedral closure of $\im(\sigma_{\omega_{n/k}})$ is $\im(\sigma_\omega)$.
		\label{symprop1}
		\item If $k=1$ and $\ord(\omega)>1$, then $\im(\sigma_\omega)\subset H_{\ord(\omega)}$.
		\label{symprop2}
	\end{enumerate}

	\begin{proof}
		Since $k=(\omega-1,n)$, we have $\ord(\omega_{n/k})=\ord(\omega)$. For $j=1,\ldots,\ord(\omega)$, write $\omega^j=1+r_jk$ and notice that
		\begin{equation*}
			\sigma_\omega\left(y+n/k\right)=\sum_{j=1}^{\ord(\omega)} e\left(\frac{(1+r_j k)(y+n/k)}{n}\right)=e(1/k)\sigma_{\omega_{n/k}}(y),
		\end{equation*}
		since $\ord(\omega)=\ord(\omega_{n/k})$. Combine this with the fact that $\sigma_\omega(-y)=\overline{\sigma_\omega(y)}$ to obtain (a). For (b), notice that $\omega+\omega^2+\cdots+\omega^{\ord(\omega)}=0$, so 
		\begin{equation*}
			\sigma_\omega(y) = e\left(\frac{-(\omega+\cdots+\omega^{\ord(\omega)-1})y}{n}\right)
			+ \sum_{j=1}^{\ord(\omega)-1} e\left(\frac{\omega^jy}{n}\right).
		\end{equation*}
		In particular, $\im(\sigma_\omega)\subset\Tr(\SU_{\ord(\omega)}(\C))$. Appealing to \cite[Theorem 3.2.3]{cooper2007} completes the proof.
	\end{proof}
\end{prop}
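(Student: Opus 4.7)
The plan for (a) is to exhibit two symmetries of $\sigma_\omega$ and then transfer a description of $\im(\sigma_{\omega_{n/k}})$ to $\im(\sigma_\omega)$ through them. Since $k\mid \omega-1$, every power $\omega^j$ is congruent to $1$ modulo $k$; writing $\omega^j = 1+r_jk$ and substituting $y\mapsto y+n/k$ produces an extra factor $e(\omega^j/k)=e(1/k)$ in each term, so $\sigma_\omega(y+n/k) = e(1/k)\sigma_\omega(y)$. This rotation identity together with the conjugation identity $\sigma_\omega(-y)=\overline{\sigma_\omega(y)}$, which is immediate from $\overline{e(\theta)}=e(-\theta)$, already forces $\im(\sigma_\omega)$ to have $k$-fold dihedral symmetry.

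The next step is to match the values of $\sigma_\omega$ on a fundamental domain $\{0,\ldots,n/k-1\}$ of the $e(1/k)$-rotation with $\im(\sigma_{\omega_{n/k}})$. The expansion $\omega^j y/n = y/n + r_j y/(n/k)$ shows that on this range $\sigma_\omega(y)$ reindexes into a sum that, under the equality $\ord(\omega)=\ord(\omega_{n/k})$ (encoded by the condition $k=(\omega-1,n)$), recovers the supercharacter sum modulo $n/k$. Invoking the characterization \eqref{diheq} of the dihedral closure then yields (a).

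For (b), the extra hypothesis $k=1$ makes $\omega-1$ a unit mod $n$, and since $\omega^d\equiv 1\pmod n$ one has
\begin{equation*}
  \sum_{j=1}^{d}\omega^j \;=\; \omega\cdot\frac{\omega^d-1}{\omega-1} \;\equiv\; 0 \pmod{n}.
\end{equation*}
Writing $e(\omega^d y/n) = e\bigl(-(\omega+\cdots+\omega^{d-1})y/n\bigr)$ expresses $\sigma_\omega(y)$ as a sum of $d$ unit complex numbers whose product is $1$, i.e.\ as the trace of a diagonal matrix in $\SU_d(\C)$. The containment $\im(\sigma_\omega)\subset H_d$ then follows from the identification $\Tr(\SU_d(\C))=H_d$ of \cite[Theorem 3.2.3]{cooper2007}.

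The main obstacle is the identification step in part (a): one has to verify that the restriction of $\sigma_\omega$ to a fundamental domain of the $e(1/k)$-rotation genuinely reconstructs $\im(\sigma_{\omega_{n/k}})$, which rests on the compatibility between $\ord(\omega)$ and $\ord(\omega_{n/k})$ that the gcd condition $k=(\omega-1,n)$ is supposed to enforce. Part (b) is essentially formal once the vanishing of $\sum_j\omega^j$ modulo $n$ is in hand.
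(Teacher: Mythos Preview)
Your proposal follows essentially the same route as the paper: for (a), both write $\omega^j = 1+r_jk$ to extract the factor $e(1/k)$ under translation by $n/k$ and then combine with the conjugation identity; for (b), both use $k=1$ to make $\omega-1$ a unit, deduce $\sum_j\omega^j\equiv 0\pmod n$, and realize $\sigma_\omega(y)$ as a trace in $\SU_d(\C)$ before invoking the cited characterization of $H_d$. The only difference is organizational: you separate the rotation identity $\sigma_\omega(y+n/k)=e(1/k)\sigma_\omega(y)$ from the identification of the values with those of $\sigma_{\omega_{n/k}}$, whereas the paper compresses both into the single displayed equation $\sigma_\omega(y+n/k)=e(1/k)\sigma_{\omega_{n/k}}(y)$---and you rightly flag that identification as the substantive step.
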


\subsection{A new perspective}

Many patterns recognizable in the plots of cyclic supercharacters can be explained by the following overlooked mechanism. The remainder of the article is dedicated to consequences of Theorem \ref{splitthm}.

\begin{thm}
	Suppose that $\sigma_\omega$ is a cyclic supercharacter mod $mn$ for positive integers $m$ and $n$. If $\ord(\omega_n)=uv$ where $(v,\ord(\omega_m))=1$, then
	\begin{equation*}
		\sigma_\omega(sm+tn)=\sum_{j=1}^{u} \sigma_{\omega_m^u}(\omega^jt)\sigma_{\omega_n^u}(\omega^j s),
	\end{equation*}
	for all integers of the form $sm+tn$.

	\begin{proof}
		Let $d$ be the order of $\omega$. We have
		\begin{align*}
			\sigma_{\omega}(sm+tn)
			&=\sum_{j=1}^d e\left(\frac{\omega^j (sm+tn)}{mn}\right)\\
			&=\sum_{j=1}^{uv}\sum_{k=1}^{d/(uv)} e\left(\frac{\omega^{j+uvk}s}{n}\right)e\left(\frac{\omega^{j+uvk}t}{m}\right)\\
			&=\sum_{j=1}^{uv} e\left(\frac{\omega^js}{n}\right) \sum_{k=1}^{d/(uv)} e\left(\frac{\omega^j\omega^{uvk}t}{m}\right)\\
			&=\sum_{j=1}^{uv} e\left(\frac{\omega^js}{n}\right) \sigma_{\omega_m^{uv}}(\omega^jt).
		\end{align*}
		Since $(v,\ord(\omega_m))=1$, we have $\omega_m^{uv}=\omega_m^u$. Moreover, it is not difficult to show that $\sigma_{\omega_m^u}(\omega^jt)$ depends only on the residue of of $j$ mod $u$. Hence
		\begin{equation*}
			\sigma_{\omega}(sm+tn)
			=\sum_{j=1}^u \sigma_{\omega_m^u}(\omega^jt)\sum_{k=1}^{v} e\left(\frac{\omega^{j+ku}s}{n}\right)
			=\sum_{j=1}^u \sigma_{\omega_m^u}(\omega^jt)\sigma_{\omega_n^u}(\omega^j s).
			\qedhere
		\end{equation*}
	\end{proof}
	\label{splitthm}
\end{thm}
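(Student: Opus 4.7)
The plan is to expand $\sigma_\omega(sm+tn)$ directly from the definition, factor each exponential across the splitting $sm+tn$, and then regroup the outer summation in two successive stages corresponding to the factorization $\ord(\omega_n)=uv$.

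Since $e(\omega^j(sm+tn)/(mn)) = e(\omega^js/n)\,e(\omega^jt/m)$, the starting point is
\[
	\sigma_\omega(sm+tn) = \sum_{j=1}^{d} e\!\left(\frac{\omega^j s}{n}\right) e\!\left(\frac{\omega^j t}{m}\right),
\]
where $d=\ord(\omega)$. The idea is to fold the $e(\omega^jt/m)$ factors into a supercharacter mod $m$ first, then repeat the procedure on what remains to produce a supercharacter mod $n$.

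For the first fold, I would partition $\{1,\ldots,d\}$ by residue mod $uv$, writing $j = i + uv\,k$. Since $\omega_n^{uv}=1$, the factor $e(\omega^js/n)$ depends only on $i$ and can be pulled out of the inner sum. A short order count using $d = \lcm(\ord(\omega_m),uv)$ shows that $k$ then ranges over exactly $\ord(\omega_m^{uv})$ values, so the inner sum is precisely $\sigma_{\omega_m^{uv}}(\omega^i t)$. At this intermediate stage the identity has been reduced to
\[
	\sigma_\omega(sm+tn) = \sum_{i=1}^{uv} e\!\left(\frac{\omega^is}{n}\right)\sigma_{\omega_m^{uv}}(\omega^it).
\]

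Next I would invoke the coprimality hypothesis $(v,\ord(\omega_m))=1$: raising to the $v$th power is a bijection on $\langle \omega_m\rangle$, so $\omega_m^u$ and $\omega_m^{uv}$ generate the same cyclic subgroup and hence define the same supercharacter on $\Z/m\Z$. Moreover, $\sigma_{\omega_m^u}(\omega^it)$ is invariant under $i\mapsto i+u$, as that shift is absorbed by a reindexing within the supercharacter sum. Grouping the outer sum into $u$ classes mod $u$ with $i = i' + ku$ for $i'\in\{1,\ldots,u\}$ and $k\in\{0,\ldots,v-1\}$, the inner sum over $k$ becomes $\sigma_{\omega_n^u}(\omega^{i'}s)$, using $\ord(\omega_n^u)=v$, which yields the claimed formula.

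The calculation is almost purely formal; the one genuinely non-trivial point is the coprimality step, which is what makes $\sigma_{\omega_m^{uv}}$ collapse to $\sigma_{\omega_m^u}$ and thereby licenses the second regrouping. The remaining hazard is careful bookkeeping of the orders and index ranges at each reindexing, rather than a real obstacle.
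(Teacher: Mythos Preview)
Your proposal is correct and follows essentially the same two-stage regrouping as the paper's proof: split the exponential, fold first modulo $uv$ to extract $\sigma_{\omega_m^{uv}}$, then use the coprimality hypothesis to replace $\omega_m^{uv}$ by $\omega_m^u$ and fold again modulo $u$ to extract $\sigma_{\omega_n^u}$. If anything, you are slightly more explicit than the paper in justifying the index counts (via $d=\lcm(\ord(\omega_m),uv)$) and in stating that $\omega_m^{uv}$ and $\omega_m^u$ generate the same cyclic subgroup rather than being literally equal.
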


\begin{cor}
	If $u=1$ in the above notation, so $(\ord(\omega_m),\ord(\omega_n))=1$, then
	\begin{equation*}
		\sigma_\omega(sm+tn)=\sigma_{\omega_m}(t)\sigma_{\omega_n}(s).
	\end{equation*}
	In particular, $\im(\sigma_\omega)\supset\im(\sigma_{\omega_m})\otimes \im(\sigma_{\omega_n})$ with equality whenever $(m,n)=1$.
\end{cor}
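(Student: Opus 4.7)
The plan is to specialize Theorem \ref{splitthm} to the case $u=1$. Substituting directly gives
\[
\sigma_\omega(sm+tn) = \sigma_{\omega_m}(\omega t)\,\sigma_{\omega_n}(\omega s),
\]
so the first task is to absorb the spurious factors of $\omega$ inside each supercharacter. For this I would record the general invariance $\sigma_\omega(\omega y)=\sigma_\omega(y)$, which is immediate from the defining sum: replacing $y$ by $\omega y$ in $\sum_{j=1}^{d} e(\omega^j y/n)$ merely shifts the index $j\mapsto j+1$, and $\omega^{d+1}=\omega$ cycles us back to the same set of summands. Applied modulo $m$ (so that $\omega t\equiv \omega_m t$) and modulo $n$, this simplifies the display to $\sigma_\omega(sm+tn)=\sigma_{\omega_m}(t)\sigma_{\omega_n}(s)$, which is the first claim.

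The inclusion $\im(\sigma_\omega)\supset \im(\sigma_{\omega_m})\otimes \im(\sigma_{\omega_n})$ then follows with no further work: an arbitrary element of the Minkowski product has the form $\sigma_{\omega_m}(t)\sigma_{\omega_n}(s)$ for some integers $s,t$, and the identity exhibits it as $\sigma_\omega(sm+tn)\in \im(\sigma_\omega)$. Note that this direction does not use $(m,n)=1$; it only uses that integers of the form $sm+tn$ exist, so that $\sigma_\omega$ is being evaluated at legal inputs.

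For the reverse inclusion when $(m,n)=1$, I would invoke B\'ezout (equivalently CRT): every integer, and in particular every residue modulo $mn$, can be written as $sm+tn$ for some $s,t\in\Z$. Hence every value of $\sigma_\omega$ on $\Z/mn\Z$ factors as $\sigma_{\omega_m}(t)\sigma_{\omega_n}(s)$, giving $\im(\sigma_\omega)\subset \im(\sigma_{\omega_m})\otimes \im(\sigma_{\omega_n})$. I do not anticipate a genuine obstacle; the only nontrivial moment is recognizing the invariance $\sigma_\omega(\omega y)=\sigma_\omega(y)$ in order to collapse the single-term sum inherited from Theorem \ref{splitthm} into the clean product form claimed.
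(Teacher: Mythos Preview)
Your argument is correct and is exactly the specialization the paper has in mind: the corollary is stated without proof, and the only nonobvious point when setting $u=1$ in Theorem~\ref{splitthm} is the cyclic invariance $\sigma_{\omega}(\omega y)=\sigma_{\omega}(y)$, which the paper already uses implicitly in the proof of the theorem (where it notes that $\sigma_{\omega_m^u}(\omega^j t)$ depends only on $j\bmod u$). Your handling of the two image inclusions via the factorization and B\'ezout is the intended reading.
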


Induction on the corollary yields \cite[Theorem 2.1]{duke2015}, although the statement there lacks a necessary hypothesis. Applying this fact to the discussion in Section \ref{ppmodsec} gives the following result, which connects images of cyclic supercharacters with Minkowski products of hypocycloids and regular polygons.

\begin{prop}
	Fix a positive integer $k$ and distinct odd primes $\prm_1,\ldots,\prm_k$. For each $j=1,\ldots,k$, let $A_j$ be either $P_{\prm_j}$ or $H_{\prm_j}^{\oplus b_j}$ for some positive integer $b_j$. There is a sequence of cyclic supercharacters whose images, when scaled appropriately, fill out $A_1\otimes\cdots\otimes A_k$. Moreover, scaling is only necessary if $A_j=P_{\prm_j}$ for some $j$.
	\label{prodsumprop}
\end{prop}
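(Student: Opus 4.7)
The plan is to realize each $A_j$ as a fill-out limit of images of cyclic supercharacters on a single prime power modulus, then glue these local supercharacters into a single composite-modulus supercharacter using the corollary to Theorem \ref{splitthm}. Because the primes $\prm_1,\ldots,\prm_k$ are distinct, arranging each local supercharacter to have order a power of $\prm_j$ makes the orders on the different prime-power factors automatically pairwise coprime, which is exactly what the corollary requires in order to identify the image of the glued supercharacter with the Minkowski product of the local images.

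Concretely, at the $r$th stage of the sequence I would pick, for each $j$, a prime $p_{j,r}$ (with the $p_{j,r}$ pairwise distinct across $j$) satisfying $p_{j,r}\equiv 1\pmod{\prm_j^{c_{j,r}}}$, an exponent $a_{j,r}$, and a unit $\omega_{j,r}$ mod $p_{j,r}^{a_{j,r}}$ of order $\prm_j^{c_{j,r}}$. The choice of $c_{j,r}$ depends on the type of $A_j$: if $A_j=H_{\prm_j}^{\oplus b_j}$, I fix $c_{j,r}$ so that $\prm_j^{c_{j,r}-1}=b_j$, in which case Theorem \ref{dukethm} together with \eqref{sumseteq} forces $\im(\sigma_{\omega_{j,r}})$ to fill out $H_{\prm_j}^{\oplus b_j}$ as $p_{j,r}^{a_{j,r}}\to\infty$; if $A_j=P_{\prm_j}$, I let $c_{j,r}\to\infty$ so that Proposition \ref{polyprop} makes the scaled images $\prm_j^{1-c_{j,r}}\im(\sigma_{\omega_{j,r}})$ fill out $P_{\prm_j}$. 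CRT then yields a unit $\omega_r$ mod $N_r:=\prod_j p_{j,r}^{a_{j,r}}$ whose reduction to each factor is $\omega_{j,r}$, and inductively applying the corollary to Theorem \ref{splitthm} gives $\im(\sigma_{\omega_r})=\bigotimes_j \im(\sigma_{\omega_{j,r}})$. The overall scaling is the product of $\prm_j^{1-c_{j,r}}$ over those $j$ with $A_j=P_{\prm_j}$ (trivial if there are no such $j$), and it absorbs into any single Minkowski factor via $\lambda(B\otimes C)=(\lambda B)\otimes C$, matching the ``scaling only if some $A_j=P_{\prm_j}$'' clause.

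The one remaining step is a soft topological lemma: if $X_r^{(j)}\subset A_j$ fills out $A_j$ as $r\to\infty$ for each $j$, then $X_r^{(1)}\otimes\cdots\otimes X_r^{(k)}$ fills out $A_1\otimes\cdots\otimes A_k$. This follows from the continuity of multiplication on $\C$: given a nonempty open $U$ around a point $a_1\cdots a_k$ of the Minkowski product, one pulls back to a product neighborhood $V_1\times\cdots\times V_k$ whose iterated product lies in $U$, and for large $r$ each $X_r^{(j)}$ meets $V_j$, yielding a point of the iterated Minkowski product in $U$. The main subtlety I anticipate is the compatibility condition $b_j=\prm_j^{c_j-1}$ implicitly forced by \eqref{sumseteq}: the proposition as stated allows any positive integer $b_j$, but the only sets $H_{\prm_j}^{\oplus b_j}$ directly produced by Theorem \ref{dukethm} correspond to $b_j$ a power of $\prm_j$, so I would either read the statement with this restriction tacitly present or supplement the construction with a separate Minkowski-sum argument not immediately furnished by Theorem \ref{splitthm}.
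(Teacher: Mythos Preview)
Your approach is exactly the one the paper has in mind: the paper's entire ``proof'' is the sentence immediately preceding the proposition, which says to combine induction on the corollary to Theorem~\ref{splitthm} with the Section~\ref{ppmodsec} material (Theorem~\ref{dukethm}, \eqref{sumseteq}, and Proposition~\ref{polyprop}). Your write-up is in fact considerably more detailed than the paper's, including the CRT gluing, the pairwise-coprimality of the local orders coming from the distinctness of the $\prm_j$, and the soft continuity lemma for Minkowski products of fill-out sequences, all of which the paper leaves implicit.

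The subtlety you flag about $b_j$ is genuine and is not resolved by the paper. Equation~\eqref{sumseteq} only yields $H_{\prm_j}^{\oplus b_j}$ when $b_j$ is a power of $\prm_j$, and nothing in Section~\ref{ppmodsec} or in the corollary to Theorem~\ref{splitthm} supplies the general positive-integer case (the corollary produces Minkowski \emph{products}, not sums, and Theorem~\ref{dukethm} is stated only for prime-power moduli). The paper's one-line justification is subject to the same restriction. So your reading---that the statement should be taken with $b_j$ a power of $\prm_j$, or else requires an argument not actually furnished---is correct, and you have not missed anything the paper provides.
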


In Figure \ref{mpfig} we plot individual terms of sequences described in Proposition \ref{prodsumprop}, where $k=2$ and $A_1=H_3$. While the boundary of the Minkowski product $H_{\prm_1}\otimes H_{\prm_2}$ ought to have $\prm_1\prm_2$ cusps, each of the plots in Figures \ref{mpfig2} and \ref{mpfig3} exhibits only $3$. This is because the values of $\sigma_{\omega_n}$ are concentrated toward the origin and hence far from the non-real cusps of $H_{\prm_2}$. In order for the image of $\sigma_\omega$ to resemble $H_3\otimes H_{\prm_2}$ visually, larger values of $n$ are necessary. In Figure \ref{mpfig1}, the expected $15$ cusps are more evident.

\begin{figure}[ht]
	\begin{subfigure}[b]{0.3\textwidth}
		\includegraphics[width=\textwidth]{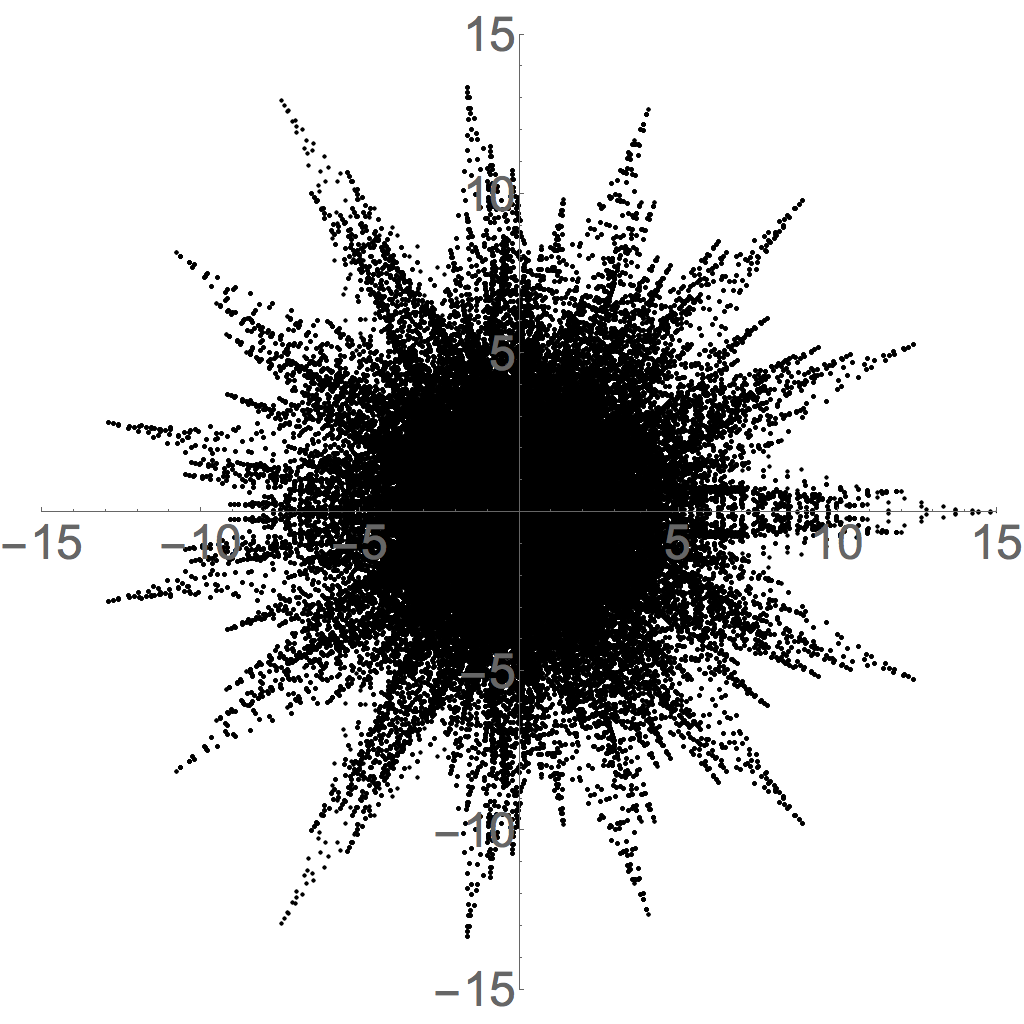}
		\caption{\scriptsize $(1033,1031,219191)$}
		\label{mpfig1}
	\end{subfigure}
	\quad
	\begin{subfigure}[b]{0.3\textwidth}
		\includegraphics[width=\textwidth]{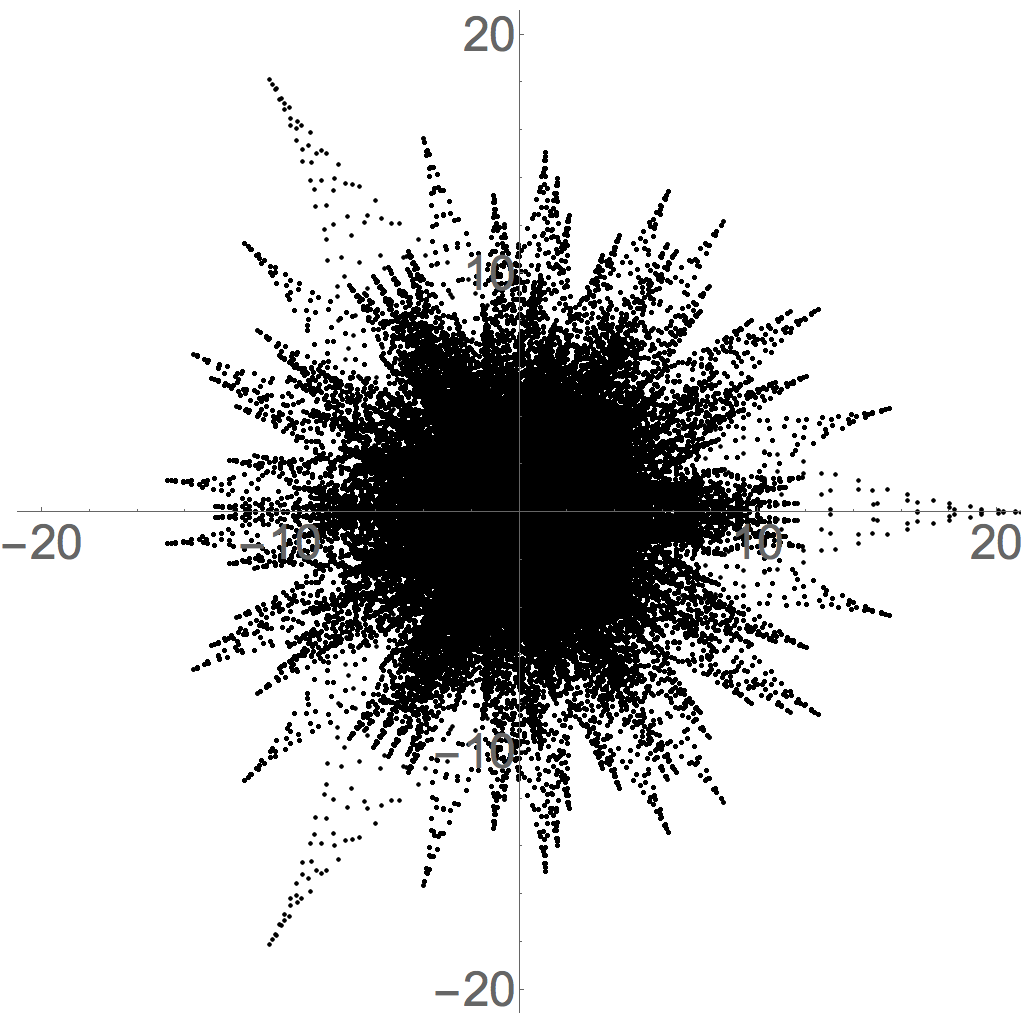}
		\caption{\scriptsize $(1153,1163,120562)$}
		\label{mpfig2}
	\end{subfigure}
	\quad
	\begin{subfigure}[b]{0.3\textwidth}
		\includegraphics[width=\textwidth]{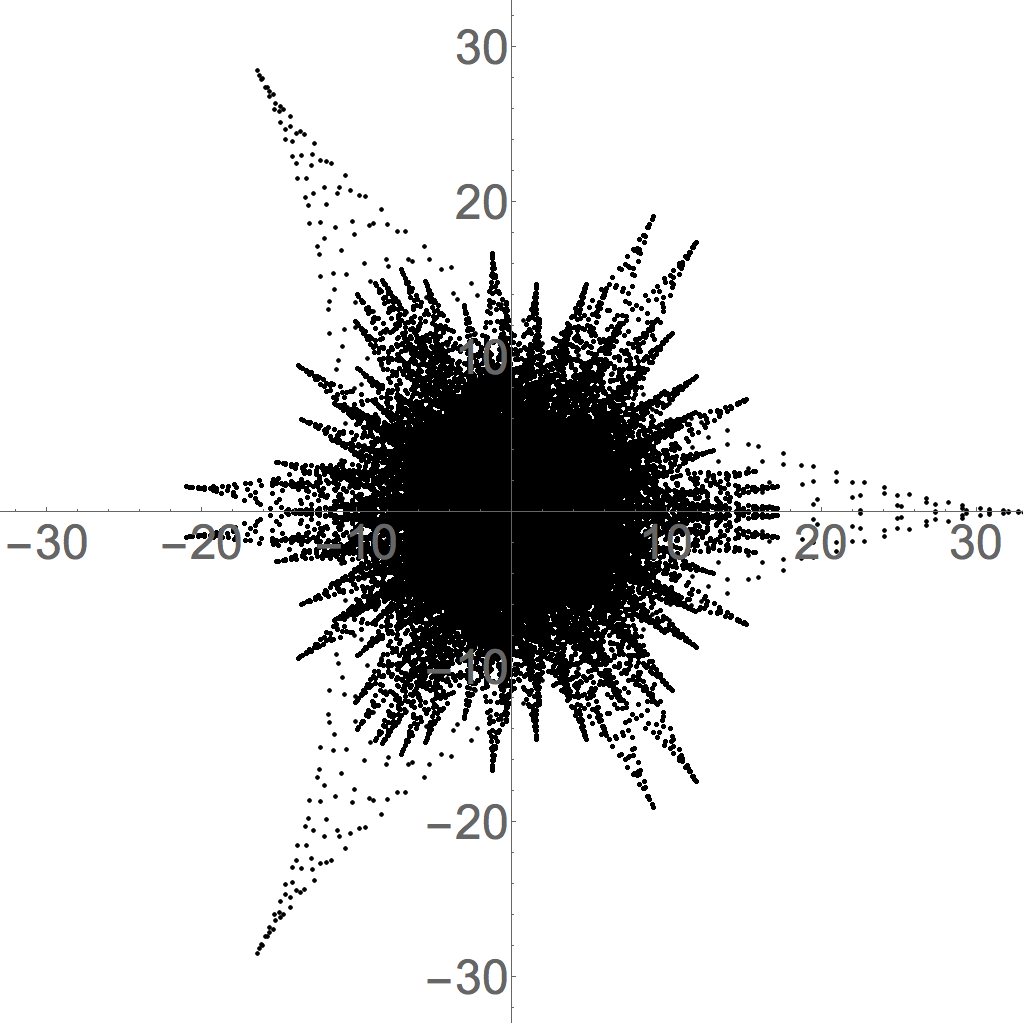}
		\caption{\scriptsize $(1399,1409,240237)$}
		\label{mpfig3}
	\end{subfigure}
	\caption{For the given triples $(m,n,\omega)$, the values of the cyclic supercharacters $\sigma_\omega$ mod $mn$ belong to $H_3\otimes H_{\prm_2}$ where, from left to right, $r_2=5$, $7$ and $11$.}
	\label{mpfig}
\end{figure}

There is no obvious characterization of $H_a\otimes H_b$, such as a parametrization of its boundary, even in terms of parametrizations of the boundaries of $H_a$ and $H_b$. We can, however, give a concrete description of the boundary of the Minkowski product of two polygons that does not appear to have been recorded previously. We defer the proof, an application of \cite[Theorem 2.4]{li2015}, to the Appendix. 

\begin{prop}
	For odd primes $k<\ell$, the boundary of $P_k\otimes P_\prm$ is contained in the $k\prm$-fold dihedral closure of the union of line segments connecting $k\prm e(\frac{1}{k}-\frac{1}{\prm})$ to $k\prm e(\frac{1}{k}+\frac{1}{\prm})$ and $k\prm\cos(\frac{\pi}{k})/\cos(\frac{\pi}{\prm})$.
	\label{dihprop}
\end{prop}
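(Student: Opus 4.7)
The plan is to apply \cite[Theorem 2.4]{li2015}, which describes the boundary of the Minkowski product of two convex polygons $A$ and $B$ as a finite union of line segments, each of the form (vertex of $A$)$\,\cdot\,$(edge of $B$) or (edge of $A$)$\,\cdot\,$(vertex of $B$); since multiplication by a fixed nonzero complex number is an affine map, each such product is genuinely a line segment in $\C$, whose endpoints are the two vertex-vertex products obtained from the endpoints of the chosen edge. Applied to $A = P_k$ and $B = P_\prm$, this yields a collection of $2k\prm$ candidate segments containing $\partial(P_k \otimes P_\prm)$.

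Next, I would exploit the $k\prm$-fold dihedral symmetry of $P_k \otimes P_\prm$: since $(k,\prm)=1$, the $k$- and $\prm$-fold rotational symmetries of $P_k$ and $P_\prm$ combine to produce $k\prm$-fold rotational symmetry, and both polygons are invariant under conjugation, so the Minkowski product is preserved by the dihedral group of order $2k\prm$. This group acts on the finite collection of candidate segments from \cite{li2015}, partitioning them into a small number of orbits, so it suffices to describe one representative from each orbit.

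Then I would identify the representatives with the objects in the proposition. Writing $v_j = k\,e(j/k)$ for the vertices of $P_k$ and $w_m = \prm\,e(m/\prm)$ for those of $P_\prm$, the natural vertex-edge representative produces the endpoints $v_1 w_{-1} = k\prm\, e(1/k - 1/\prm)$ and $v_1 w_1 = k\prm\, e(1/k + 1/\prm)$, matching the first two points listed. The real number $k\prm\cos(\pi/k)/\cos(\pi/\prm)$ should then arise as a distinguished point on an edge-vertex representative, recognized through the apothem formulas $k\cos(\pi/k)$ and $\prm\cos(\pi/\prm)$ for the two polygons and a support-line computation along the real axis.

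The main obstacle is exactly this last identification: matching $k\prm\cos(\pi/k)/\cos(\pi/\prm)$ with a specific geometric feature of the vertex-edge and edge-vertex segments requires trigonometric bookkeeping via identities such as $\cos A - \cos B = -2\sin\tfrac{A+B}{2}\sin\tfrac{A-B}{2}$, together with a short case analysis to confirm that no additional orbit representatives are required. Once the representatives and their endpoints are verified, the containment of $\partial(P_k \otimes P_\prm)$ in the $k\prm$-fold dihedral closure of the stated union follows immediately from the conclusion of \cite[Theorem 2.4]{li2015}.
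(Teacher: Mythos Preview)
Your plan is close in spirit to the paper's proof---both hinge on \cite[Theorem~2.4]{li2015} together with the $k\prm$-fold dihedral symmetry---but the two arguments apply the cited result at different stages and to different objects.

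You propose to invoke the theorem for the full polygons $P_k$ and $P_\prm$, obtaining $2k\prm$ vertex--edge candidate segments, and only then quotient by the dihedral action. The paper instead uses symmetry \emph{first}: since $P_k$ and $P_\prm$ are star shaped about $0$, one has $\partial(P_k\otimes P_\prm)=\partial(\partial P_k\otimes\partial P_\prm)$, and because $(k,\prm)=1$ the product $\partial P_k\otimes\partial P_\prm$ equals $\{e(j/k\prm):j=1,\dots,k\prm\}\otimes(E_k\otimes E_\prm)$, where $E_k,E_\prm$ are the single edges perpendicular to the real axis. Thus everything reduces to computing the outer boundary of the product of two explicit line segments, and it is to this segment--segment product (after normalizing by the apothems $a_k=-k\cos\frac{\pi}{k}$, $a_\prm=-\prm\cos\frac{\pi}{\prm}$) that the paper applies \cite[Theorem~2.4(c)]{li2015}, obtaining a concrete hexagonal region with vertices $z_1,z_2,z_3,\overline{z_2},\overline{z_3}$ given by simple tangent expressions. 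The three points in the statement then drop out via angle-addition identities.

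Your route should also work, but be careful: the version of the theorem you invoke (a vertex--edge decomposition of $\partial(A\otimes B)$ for filled convex polygons containing the origin) is not quite what the paper cites, which is part~(c) applied to normalized line segments of the form $\{1+it:|t|\le\tan\theta\}$. Reducing to $E_k\otimes E_\prm$ first, as the paper does, sidesteps this issue and makes the trigonometric identification you flag as the ``main obstacle'' essentially mechanical, since the apothem scaling is already built into $a_k,a_\prm$.
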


\subsection{Gauss sums}

Henceforth, $p$ will denote an odd prime number. Recall that a \emph{character} mod $p$ is a group map $\chi:(\Z/p\Z)^\times \to \T$. For each integer $k$, let $\chi^k$ be the character $x\mapsto \chi(x)^k$, and recall that the \emph{order} of $\chi$ is the smallest positive $k$ for which $\chi^k$ is identically 1. For each $p$, the unique character mod $p$ of order 2 is the familiar Legendre symbol. 

There are two types of exponential sum bearing the name \emph{Gauss sum mod $p$ of order $k$}, which we distinguish by their notation. The first, defined for any positive divisor $k$ of $\phi(p)$, is the function $g_k:(\Z/p\Z)^\times\to \C$ given by
\begin{equation*}
g_k(t)=\sum_{j=1}^{p} e\left(\frac{tj^k}{p}\right).
\end{equation*}
The similarity in notation to the functions in Theorem \ref{dukethm} is pure coincidence; the reader may consider the notation overwritten. For all $t$ coprime to $p$, notice that
\begin{equation}
g_1(t)=\sum_{j=0}^{p-1} e(t/p)^j=\frac{1-e(t/p)^p}{1-e(t/p)} = 0.
\label{g1eq}
\end{equation}

The second type of \emph{Gauss sum mod $p$ of order $k$}, defined in terms of a character $\chi$ mod $p$ of order $k$, is also a function $G(\cdot,\chi) : (\Z/p\Z)^\times \to \C$, this time given by
\begin{equation*}
G(t,\chi)=\sum_{j=1}^{p-1} \chi(j) e\left(\frac{tj}{p}\right).
\end{equation*}
We write $G(\chi)=G(1,\chi)$ and make tacit use of the following identities:
\begin{equation*}
G(t,\chi)=\overline{\chi(t)}G(\chi)=\chi(-1)\overline{G(t,\overline{\chi})}.
\end{equation*}
The two types of Gauss sum are related by
\begin{equation}
g_k(t)=\sum_{j=1}^{k-1} G(t,\chi^j).
\label{sumreleq}
\end{equation}

In addition to proofs of the last few facts, the reader can find in \cite{berndt1998} explicit evaluations of $g_k$ for small $k$ up to certain sign ambiguities, some of which persist to this day. Gauss resolved the issue for $g_2$ in terms of the Legendre symbol $\chi$ by showing that
\begin{equation}
\chi(t)g_2(t)=
\begin{cases}
\sqrt{p}, &\mbox{if }p\equiv 1\pmod{4}\\
i\sqrt{p}, &\mbox{if }p\equiv 3\pmod{4}.\\
\end{cases}
\label{quadsumeq}
\end{equation}
The next two lemmas are of technical import only; the casual reader is invited to skim their proofs, although they are used in what follows. We denote the real part of a complex number $z$ by $\Re(z)$ and the imaginary part by $\Im(z)$.

\begin{lem}
If, in addition to the hypotheses of Theorem \ref{splitthm}, $m=p$ is an odd prime, $\omega_p$ is a primitive root mod $p$, and $t$ a unit mod $p$, then
\begin{equation}
\sigma_\omega(sm+tn) = \frac{1}{(u,\phi(p))}\sum_{j=1}^u (g_{(u,\phi(p))}(\omega^j t)-1)\sigma_{\omega_n^u}(\omega^j s).
\label{periodsumeq}
\end{equation}

\begin{proof}
To Theorem \ref{splitthm}, apply the observation that
\begin{equation*}g_k(r)-1=k\sum_{j=1}^{\phi(p)/k} e\left(\frac{r\omega^{jk}}{p}\right)=k\sigma_{\omega^k}(r).\qedhere\end{equation*}
\end{proof}
\label{fracglem}
\end{lem}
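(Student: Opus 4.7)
The plan is to start from Theorem \ref{splitthm} with $m=p$, which immediately gives
\begin{equation*}
\sigma_\omega(sm+tn) = \sum_{j=1}^u \sigma_{\omega_p^u}(\omega^j t)\,\sigma_{\omega_n^u}(\omega^j s),
\end{equation*}
and then rewrite the first factor as $(g_k(\omega^j t)-1)/k$, where $k=(u,\phi(p))$. The whole lemma reduces to establishing the pointwise identity $k\,\sigma_{\omega_p^u}(r) = g_k(r)-1$ for every unit $r$ mod $p$.

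First I would observe that a cyclic supercharacter $\sigma_{\omega'}$ on $\Z/p\Z$ depends on $\omega'$ only through the cyclic subgroup $\langle \omega'\rangle$ of $(\Z/p\Z)^\times$, since the defining sum runs over all elements of that subgroup. Because $\omega_p$ is a primitive root, $\omega_p^u$ and $\omega_p^k$ both generate the unique subgroup of order $\phi(p)/k$, so $\sigma_{\omega_p^u}(r) = \sigma_{\omega_p^k}(r)$ for all $r$. This substitution replaces an awkward power ($u$, which need not divide $\phi(p)$) by one ($k$) that does.

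Next I would connect $\sigma_{\omega_p^k}$ to $g_k$ by counting preimages under the $k$-th power map. Since $k\mid \phi(p)$, the map $j\mapsto j^k$ on $(\Z/p\Z)^\times$ is a $k$-to-$1$ surjection onto $\langle \omega_p^k\rangle = \{\omega_p^{ki}: 1\le i\le \phi(p)/k\}$. Peeling off the $j=p$ contribution, which equals $1$, yields
\begin{equation*}
g_k(r)-1 \;=\; \sum_{j=1}^{p-1} e\!\left(\frac{rj^k}{p}\right) \;=\; k\sum_{i=1}^{\phi(p)/k} e\!\left(\frac{r\omega_p^{ki}}{p}\right) \;=\; k\,\sigma_{\omega_p^k}(r),
\end{equation*}
which is precisely the observation recorded in the author's proof. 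Substituting this identity (with $r=\omega^j t$) into the expansion from Theorem \ref{splitthm} produces \eqref{periodsumeq}.

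I do not anticipate a genuine obstacle here: once one recognizes that $\sigma_{\omega_p^u}$ and $\sigma_{\omega_p^k}$ coincide as functions, the rest is a one-line counting argument and a routine substitution. If anything deserves care, it is only the justification that passing from the generator $\omega_p^u$ to $\omega_p^k$ does not alter the supercharacter's values, which rests squarely on $\omega_p$ being a primitive root.
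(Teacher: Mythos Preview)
Your proposal is correct and follows essentially the same route as the paper: apply Theorem~\ref{splitthm}, then use the identity $g_k(r)-1=k\,\sigma_{\omega_p^k}(r)$ (with $k=(u,\phi(p))$) to rewrite the factor $\sigma_{\omega_p^u}(\omega^j t)$. You have simply made explicit the step the paper leaves tacit, namely that $\sigma_{\omega_p^u}=\sigma_{\omega_p^k}$ because $\omega_p$ is a primitive root and hence $\langle\omega_p^u\rangle=\langle\omega_p^k\rangle$.
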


\begin{lem}
If $k$ is a positive even integer and $p\equiv 1\pmod{2k}$ is an odd prime, then $g_k$ is real valued.

\begin{proof}
Let $\chi$ be a character mod $p$ of order $k$. We have
\begin{align*}
g_k(t)
&=\sum_{j=1}^{k-1} G(t,\chi^j)\\
&=G(t,\chi^{k/2})+\sum_{j=1}^{k/2-1} G(t,\chi^j) + \sum_{j=1}^{k/2-1} G(t,\overline{\chi}^j)\\
&=g_2(t)+\sum_{j=1}^{k/2-1} G(t,\chi^j) + \sum_{j=1}^{k/2-1} \chi^j(-1)G(t,\overline{\chi}^j)\\
&=g_2(t)+\sum_{j=1}^{k/2-1} G(t,\chi^j) + \sum_{j=1}^{k/2-1} (-1)^{j\phi(p)/u} G(t,\overline{\chi}^j)\\
&=g_2(t)+2\sum_{j=1}^{k/2-1} \Re(G(t,\chi^j)),
\end{align*}
where $g_2(t)$ is real by \eqref{quadsumeq}.
\end{proof}
\label{realglem}
\end{lem}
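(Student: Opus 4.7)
The plan is to fix a character $\chi$ mod $p$ of order $k$ and start from the identity \eqref{sumreleq}, which gives $g_k(t) = \sum_{j=1}^{k-1} G(t, \chi^j)$. Since $k$ is even, I would peel off the middle summand at $j = k/2$ and pair the remaining indices as $(j, k-j)$ for $j = 1, \ldots, k/2 - 1$. Because $\chi$ has exact order $k$, we have $\chi^{k-j} = \overline{\chi^j}$, so the identity $G(t, \overline{\chi^j}) = \chi^j(-1)\overline{G(t, \chi^j)}$ recorded just above \eqref{sumreleq} rewrites each such pair as $G(t,\chi^j) + \chi^j(-1)\overline{G(t,\chi^j)}$.

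The key arithmetic step is to show that the hypothesis $p \equiv 1 \pmod{2k}$ forces $\chi(-1) = 1$. Writing $\omega_p$ for a primitive root mod $p$, $\chi(\omega_p)$ is a primitive $k$th root of unity, and $\chi(-1) = \chi(\omega_p)^{(p-1)/2}$. Since $2k \mid p-1$, $k$ divides $(p-1)/2$, so this exponent is a multiple of $k$ and $\chi(-1) = 1$. Therefore each paired contribution collapses to $2\Re(G(t,\chi^j))$.

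For the middle term, $\chi^{k/2}$ is the unique character of order $2$, namely the Legendre symbol, so by \eqref{sumreleq} applied with $k=2$ one gets $G(t, \chi^{k/2}) = g_2(t)$. The same divisibility also gives $p \equiv 1 \pmod 4$, so \eqref{quadsumeq} evaluates $g_2(t)$ to $\pm\sqrt{p}$, which is real. Summing everything yields
\[
g_k(t) = g_2(t) + 2\sum_{j=1}^{k/2 - 1} \Re(G(t,\chi^j)),
\]
manifestly real. The case $t \equiv 0 \pmod p$ is trivial because $g_k(0) = p$.

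There is no substantial obstacle here; every ingredient is already in the excerpt. The only things demanding care are the precise exponent bookkeeping showing $\chi(-1) = 1$ from $p \equiv 1 \pmod{2k}$, and the identification of $\chi^{k/2}$ with the Legendre symbol so that the middle term is exactly $g_2(t)$.
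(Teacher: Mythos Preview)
Your proposal is correct and follows essentially the same route as the paper: split the sum \eqref{sumreleq} at $j=k/2$, pair $j$ with $k-j$ via $\chi^{k-j}=\overline{\chi}^{\,j}$, use the conjugation identity for Gauss sums, and invoke $p\equiv 1\pmod{2k}$ to force $\chi(-1)=1$ so each pair becomes $2\Re(G(t,\chi^j))$, while the middle term is $g_2(t)$, real by \eqref{quadsumeq}. Your justification of $\chi(-1)=1$ via $k\mid (p-1)/2$ is in fact more explicit than the paper's version, which passes through $(-1)^{j\phi(p)/u}$ (with $u$ an apparent typo for $k$).
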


\subsection{Main results}
\label{mainsec}

For the rest of the article, it will suit us to treat $\C$ as an $\R$-algebra with basis $(1,i)$, so that if $z=\alpha+i\beta$ for real $\alpha$ and $\beta$, then
\begin{equation*}
\begin{pmatrix}
a&b\\
c&d
\end{pmatrix}
z=
\begin{pmatrix}
a&b\\
c&d
\end{pmatrix}
\begin{pmatrix}
\alpha\\
\beta
\end{pmatrix}
=
\begin{pmatrix}
a\alpha+b\beta\\
c\alpha+d\beta
\end{pmatrix}
=(a\alpha+b\beta)+i(c\alpha+d\beta).
\end{equation*}
The following results are typical consequences of Lemma \ref{fracglem}. By exploiting the additional requirement that $\omega_n$ be a root of $-1$, we are able to write $\sigma_\omega$ in terms of $\sigma_{\omega_n^u}$ subject to certain $\R$-linear transformations. When the corresponding matrix representations have at most $2$ nonzero entries, we obtain explanations of various graphical features, including some depicted in \cite{duke2015} and \cite{garcia2015}, which is our goal. Ellipses, rhombi, astroids, and other plane figures lurk in the images of the cyclic supercharacters described by Theorem \ref{shapethm}. We present several examples in the next section. 

\begin{thm}
In the notation of Theorem \ref{splitthm}, suppose that $u$ is even, $v$ odd, and $m=p$ an odd prime. Let $r$ be a positive integer, and suppose further that $\omega_n^{uv/2}=-1$, $\ord(\omega_p)=\frac{1}{r}\phi(p)$ and $(t,p)=1$.

\begin{enumerate}[label={(\alph*)},ref={\thethm(\alph*)}]
\item If $p\equiv 1\pmod{2r u}$, then
\begin{equation*}
\sigma_\omega(sp+tn)=\frac{2}{r u}\sum_{j=1}^{u/2}
\begin{pmatrix}
g_{r u/2}(\omega^j t)-1&0\\
0&g_{r u}(\omega^j t)-g_{r u/2}(\omega^j t)
\end{pmatrix}
\sigma_{\omega_n^u}(\omega^j s).
\end{equation*}
\label{ellipsethm}

\item If $4|u$ and $p\equiv 1+\frac{r u}{2}\pmod{r u}$, then
\begin{equation*}
\sigma_\omega(sp+tn)=\frac{4}{r u}\sum_{j=1}^{u/2}
\begin{pmatrix}
\Re(g_{r u/2}(\omega^j t))-1&0\\
\Im(g_{r u/2}(\omega^j t))&0
\end{pmatrix}
\sigma_{\omega_n^u}(\omega^j s).
\end{equation*}
\label{rhomthm}
\end{enumerate}

\begin{proof}
We show (a) in detail and describe an analogous proof of (b). In either setting, since $v$ is odd, the set of residues of the form $\omega_n^{u/2}\omega^{uj}_n$ for $j=1,\ldots,v$ is equal to the set of residues of the form $\omega_n^{uv/2}\omega_n^{uj}=-\omega_n^{uj}$. Hence
\begin{equation}
\sigma_{\omega_n^u}(\omega^{u/2}s)
=\sum_{j=1}^v e\left(\frac{\omega^{u/2}\omega^{uj}}{n}\right)
=\sum_{j=1}^v e\left(\frac{-\omega^{uj}}{n}\right)
=\overline{\sigma_{\omega_n^u}(s)}
\label{randeq}
\end{equation}
for all $s$. Suppose now that $p\equiv 1\pmod{2r u}$, as in (i). Lemma \ref{fracglem} says that
\begin{equation*}
\sigma_\omega(sp+tn)=\frac{1}{r u}\sum_{j=1}^{uv} (g_{r u}(\omega^jt)-1)\sigma_{\omega_n^u}(\omega^j s),
\end{equation*}
where, by \eqref{randeq} and Lemma \ref{realglem}, we have
\begin{multline}
(g_{r u}(\omega^j t)-1)\sigma_{\omega_n^u}(\omega^j s)
+(g_{r u}(\omega^{j+u/2} t)-1)\sigma_{\omega_n^u}(\omega^{j+u/2} s)\\
=
\begin{pmatrix}
g_{r u}(\omega^j t)+g_{r u}(\omega^{j+u/2} t)-2&0\\
0&g_{r u}(\omega^j t)-g_{r u}(\omega^{j+u/2} t)
\end{pmatrix}
\sigma_{\omega_n^u}(\omega^j s),
\label{bigmatrixeq}
\end{multline}
for $j=1,\ldots,\frac{u}{2}$. Let $\chi$ be the character mod $p$ of order $r u$ with $\chi(\omega)=e(\frac{1}{u})$, and notice that
\begin{equation*}
g_{r u}(\omega^{u/2} t)
= \sum_{j=1}^{r u-1} G(\omega^{u/2}t,\chi^j)
=\sum_{j=1}^{r u-1} \chi^j(\omega^{-u/2})G(t,\chi^j)
= \sum_{j=1}^{r u-1} (-1)^j G(t,\chi^j).
\end{equation*}
It follows that
\begin{equation}
g_{r u}(t)+g_{r u}(\omega^{u/2} t) = \sum_{j=1}^{r u - 1} G(t,\chi^j)+(-1)^jG(t,\chi^j)=2g_{r u/2}(t),
\label{topeq}
\end{equation}
which gives
\begin{equation*}
g_{r u}(t)-g_{r u}(\omega^{u/2} t) = 2(g_{r u}(t)-g_{r u/2}(t)).
\end{equation*}
Combining this with \eqref{bigmatrixeq} and \eqref{topeq} completes the proof of (a). The argument for (b) is similar in spirit to the one just given, with the main differences being that $\sigma_{\omega_p^u}=g_{r u/2}$ and $g_{r u/2}(\omega^{u/2} t) = g_{r u/2}(t)$ for all $t$.
\end{proof}
\label{shapethm}
\end{thm}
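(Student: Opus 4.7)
The plan is to begin from Theorem \ref{splitthm}, which under the hypotheses of Theorem \ref{shapethm} yields
\begin{equation*}
\sigma_\omega(sp + tn) = \sum_{j=1}^u \sigma_{\omega_p^u}(\omega^j t)\,\sigma_{\omega_n^u}(\omega^j s),
\end{equation*}
and then to rewrite each $\sigma_{\omega_p^u}$ as a normalized Gauss sum via the identity $g_k(t) - 1 = k\sigma_{g^k}(t)$ (for a primitive root $g$ mod $p$) used in the proof of Lemma \ref{fracglem}. Since $\omega_p$ has order $\phi(p)/r$ rather than $\phi(p)$, the first task is to identify the subgroup $\langle \omega_p^u\rangle$ in each case. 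In (a), the congruence $p \equiv 1 \pmod{2ru}$ forces $\gcd(u, \phi(p)/r) = u$, so $\langle \omega_p^u \rangle = \langle g^{ru}\rangle$ and $\sigma_{\omega_p^u}(t) = (g_{ru}(t) - 1)/(ru)$. In (b), the congruence $p \equiv 1 + ru/2 \pmod{ru}$ together with $4 \mid u$ makes $\ord(\omega_p^u)$ an odd integer equal to $\ord(g^{ru/2})$; since $\langle \omega_p^u\rangle \subseteq \langle g^{ru/2}\rangle$ and both have the same order, they coincide, giving $\sigma_{\omega_p^u}(t) = (g_{ru/2}(t) - 1)/(ru/2)$.

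The second step is a pairing identity for the factor involving $\sigma_{\omega_n^u}$: the hypotheses $\omega_n^{uv/2} = -1$ and $v$ odd imply that $\{\omega_n^{u/2 + uj}\}_{j=1}^v = \{-\omega_n^{uj}\}_{j=1}^v$, whence $\sigma_{\omega_n^u}(\omega^{u/2} s) = \overline{\sigma_{\omega_n^u}(s)}$. Writing $\sigma_{\omega_n^u}(\omega^j s) = \alpha + i\beta$ and letting $A, B$ denote the Gauss-sum coefficients at indices $j$ and $j + u/2$, the paired contribution to $\sigma_\omega(sp + tn)$ is $A(\alpha + i\beta) + B(\alpha - i\beta) = (A+B)\alpha + i(A-B)\beta$. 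This expresses the whole sum, now reindexed by $j = 1, \ldots, u/2$, as an $\R$-linear transformation acting on $\sigma_{\omega_n^u}(\omega^j s)$, with the precise matrix form dictated by the relationship between $A$ and $B$.

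For (a), Lemma \ref{realglem} applies to $g_{ru}$ (since $ru$ is even and $p \equiv 1 \pmod{2ru}$), so $A$ and $B$ are real and the pairing produces a diagonal matrix with entries $A+B$ and $A-B$. A short character-theoretic calculation, using a character $\chi$ of order $ru$ with $\chi(\omega) = e(1/u)$ and thus $\chi(\omega^{u/2}) = -1$, shows $g_{ru}(t) + g_{ru}(\omega^{u/2}t) = 2 g_{ru/2}(t)$, since the odd-indexed terms $(1 + (-1)^j) G(t, \chi^j)$ vanish and the even-indexed ones assemble into $g_{ru/2}$; combining with the prefactor $1/(ru)$ gives the claimed diagonal entries $g_{ru/2} - 1$ and $g_{ru} - g_{ru/2}$ with overall prefactor $2/(ru)$. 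For (b), the subgroup identification above places $\omega^{u/2}$ inside $\langle \omega_p^u\rangle$, so $g_{ru/2}(\omega^{u/2} t) = g_{ru/2}(t)$ and hence $A = B$; the pairing then collapses to $2A\alpha = 2\Re(A)\alpha + 2i\Im(A)\alpha$, producing the single-column matrix in the statement with prefactor $4/(ru)$.

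The main obstacle I anticipate is the subgroup bookkeeping underlying (b): one must show that $p \equiv 1 + ru/2 \pmod{ru}$ with $4 \mid u$ really does force $\ord(\omega_p^u)$ odd and identifies $\langle \omega_p^u \rangle$ with $\langle g^{ru/2}\rangle$, as a naive application of Lemma \ref{fracglem} would supply $g_{ru}$ instead of $g_{ru/2}$. Once this identification is in hand, the Gauss-sum manipulation and the matrix pairing proceed largely in parallel to (a).
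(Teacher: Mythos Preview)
Your proposal is correct and follows essentially the same route as the paper: start from Theorem~\ref{splitthm}, convert $\sigma_{\omega_p^u}$ into a normalized Gauss sum, establish the conjugation identity $\sigma_{\omega_n^u}(\omega^{u/2}s)=\overline{\sigma_{\omega_n^u}(s)}$, pair the $j$ and $j+u/2$ terms, and in case (a) use the character computation $g_{ru}(t)+g_{ru}(\omega^{u/2}t)=2g_{ru/2}(t)$ together with Lemma~\ref{realglem}. Your treatment of (b) is in fact more explicit than the paper's, which only sketches that $\sigma_{\omega_p^u}$ corresponds to $g_{ru/2}$ and that $g_{ru/2}(\omega^{u/2}t)=g_{ru/2}(t)$; you supply the subgroup identification $\langle\omega_p^u\rangle=\langle g^{ru/2}\rangle$ that justifies both claims.
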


Certain families of real-valued cyclic supercharacters, while less interesting from a visual standpoint, can also be described by Theorem \ref{splitthm}. The following proposition describes two. We omit the proof, which resembles the previous one.

\begin{prop}
Suppose, in addition to the hypotheses of Lemma \ref{fracglem}, that $v$ is odd and $\omega_n^{uv/2}=-1$.
\begin{enumerate}[label={(\alph*)},ref={\thethm(\alph*)}]
\item
If $u=2$ and $p\equiv 3\pmod{4}$, then
\begin{equation*}
\sigma_\omega(sp+tn)
=
\begin{pmatrix}
-1&-\chi(t)\sqrt{p}\\
0&0
\end{pmatrix}
\sigma_{\omega_n^2}(s).\end{equation*}
\item
Suppose that $p\equiv 5\pmod{8}$, and let $\chi$ be the unique character mod $p$ with $\chi(\omega)=i$. If $u=4$, then
\begin{equation*}
\sigma(sp+tn)
=\begin{pmatrix}
\frac{1}{2}(g_2(t)-1)&\Re(G(t,\chi))\\
0&0
\end{pmatrix}
\sigma_{\omega_n^4}(s)
+\begin{pmatrix}
\frac{1}{2}(-g_2(t)-1)&\Im(G(t,\chi))\\
0&0
\end{pmatrix}
\sigma_{\omega_n^4}(\omega s).
\end{equation*}
\end{enumerate}
\end{prop}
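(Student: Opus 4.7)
The plan is to mirror the proof of Theorem \ref{shapethm}: start from Lemma \ref{fracglem}, then collapse the resulting $u$-fold sum using the two identities $\sigma_{\omega_n^u}(\omega^{u/2}s)=\overline{\sigma_{\omega_n^u}(s)}$ (established en route to Theorem \ref{shapethm} from the hypotheses $v$ odd and $\omega_n^{uv/2}=-1$) and $\sigma_{\omega_n^u}(\omega^u s)=\sigma_{\omega_n^u}(s)$ (immediate from the fact that $\omega_n^u$ permutes the summand set of $\sigma_{\omega_n^u}$). Afterwards I would substitute the Gauss-sum evaluations available under the particular congruence on $p$ and read off the $\R$-linear coefficients.

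For (a), take $u=2$ and $(u,\phi(p))=2$ in Lemma \ref{fracglem} to express $\sigma_\omega(sp+tn)$ as a two-term sum over $j=1,2$. The two identities above turn $\sigma_{\omega_n^2}(\omega s)$ into $\overline{\sigma_{\omega_n^2}(s)}$ and $\sigma_{\omega_n^2}(\omega^2 s)$ into $\sigma_{\omega_n^2}(s)$. Substituting $g_2(t)=\chi(t)i\sqrt{p}$ from \eqref{quadsumeq} (valid for $p\equiv 3\pmod 4$) and using $\chi(\omega)=-1$ (because the primitive root $\omega$ is not a quadratic residue mod $p$), the imaginary parts cancel and the remaining real expression in $\Re(\sigma_{\omega_n^2}(s))$ and $\Im(\sigma_{\omega_n^2}(s))$ is exactly the claimed matrix form.

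For (b), take $u=4$; since $p\equiv 5\pmod 8$, $(4,\phi(p))=4$, so Lemma \ref{fracglem} produces a four-term sum. Pairing the $j$- and $(j+2)$-terms reduces the $\sigma_{\omega_n^4}$ factors to $\sigma_{\omega_n^4}(s)$, $\sigma_{\omega_n^4}(\omega s)$, and their conjugates. Expand each $g_4(\omega^j t)$ via \eqref{sumreleq} as $G(t,\chi)+g_2(t)+G(t,\chi^3)$ and push the shift onto the Gauss sums using $G(at,\chi^\nu)=\overline{\chi^\nu(a)}G(t,\chi^\nu)$ together with $\chi(\omega)=i$; this produces multipliers $(-i)^j,(-1)^j,i^j$ on the three pieces. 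Combined with $G(t,\chi^3)=\chi(-1)\overline{G(t,\chi)}$ and $\chi(-1)=i^{(p-1)/2}=-1$ for $p\equiv 5\pmod 8$, the paired terms simplify so that $g_2(t)$ and $\Re(G(t,\chi))$, $\Im(G(t,\chi))$ emerge as the $\R$-linear coefficients against the real and imaginary parts of $\sigma_{\omega_n^4}(s)$ and $\sigma_{\omega_n^4}(\omega s)$. The main obstacle I anticipate is the bookkeeping: tracking the signs from $(-i)^j$, $(-1)^j$, and $i^j$ across the four terms, and verifying that both resulting $\R$-linear functionals take only real values, so that the second rows of the matrices in the claim vanish.
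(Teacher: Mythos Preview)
Your proposal is correct and matches the paper's approach: the paper omits the proof entirely, remarking only that it ``resembles the previous one,'' i.e., the proof of Theorem~\ref{shapethm}. Your plan to start from Lemma~\ref{fracglem}, pair the $j$- and $(j+u/2)$-terms via $\sigma_{\omega_n^u}(\omega^{u/2}s)=\overline{\sigma_{\omega_n^u}(s)}$, and then insert the Gauss-sum evaluations coming from \eqref{quadsumeq}, \eqref{sumreleq}, and the identity $G(t,\chi^3)=\chi(-1)\overline{G(t,\chi)}$ is exactly that argument specialized to $u=2$ and $u=4$.
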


\section{Examples}

The images of cyclic supercharacters $\sigma_\omega$ satisfying the hypotheses of Theorem \ref{shapethm} belong to Minkowski sums of $\im(\sigma_{\omega_n})$ where each summand is subject to an $\R$-linear transformation. This observation informs our perspective in what follows.

For a positive integer $a$, a divisor $b$ of $a$, and a unit $\omega$ mod $a$, the sets
\begin{equation*}
\{\sigma_\omega(y) : y\equiv j \pmod{b}\}
\end{equation*}
for $j=0,1,\ldots,b-1$ are called the \emph{layers mod $b$} of $\sigma_\omega$. The layer mod $b$ corresponding to $j=0$ is called \emph{trivial}. Different shades of points plotted in Figures \ref{prettyfig}, \ref{stretchhypofig2}, \ref{windingfig2}, \ref{rhomfig2}, \ref{mysteryfig1} and \ref{mysteryfig2} mark distinct layers of the corresponding cyclic supercharacters. That is, in each figure, if $y\equiv y'\pmod b$ for some fixed divisor $b$ of the modulus, then $\sigma_\omega(y)$ and $\sigma_\omega(y')$ have the same shade. Under the hypotheses of Theorem \ref{shapethm}, $\sigma_\omega$ has $r+1$ distinct layers mod $p$, the trivial one of which is the subset of $\R$ consisting of all values $\sigma_\omega(sp+tn)$ for which $p|t$.

\subsection{Stretching}
\label{ellipsesec}

In the following, we assume the hypotheses of Theorem \ref{ellipsethm}, where the $\R$-linear transformations discussed above are scalings along the real and imaginary axes, possibly by negative factors. When $n$ is an odd prime distinct from $p$ and $v$ is a power of an odd prime, the discussion in Section \ref{ppmodsec} tells us that the corresponding images $\im(\omega_\omega)$ can be arranged in sequences filling out Minkowski sums of stretched versions of $H_r$. Figure \ref{stretchhypofig} illustrates this behavior.

\begin{figure}[ht]
\begin{subfigure}[b]{0.3\textwidth}
\includegraphics[width=\textwidth]{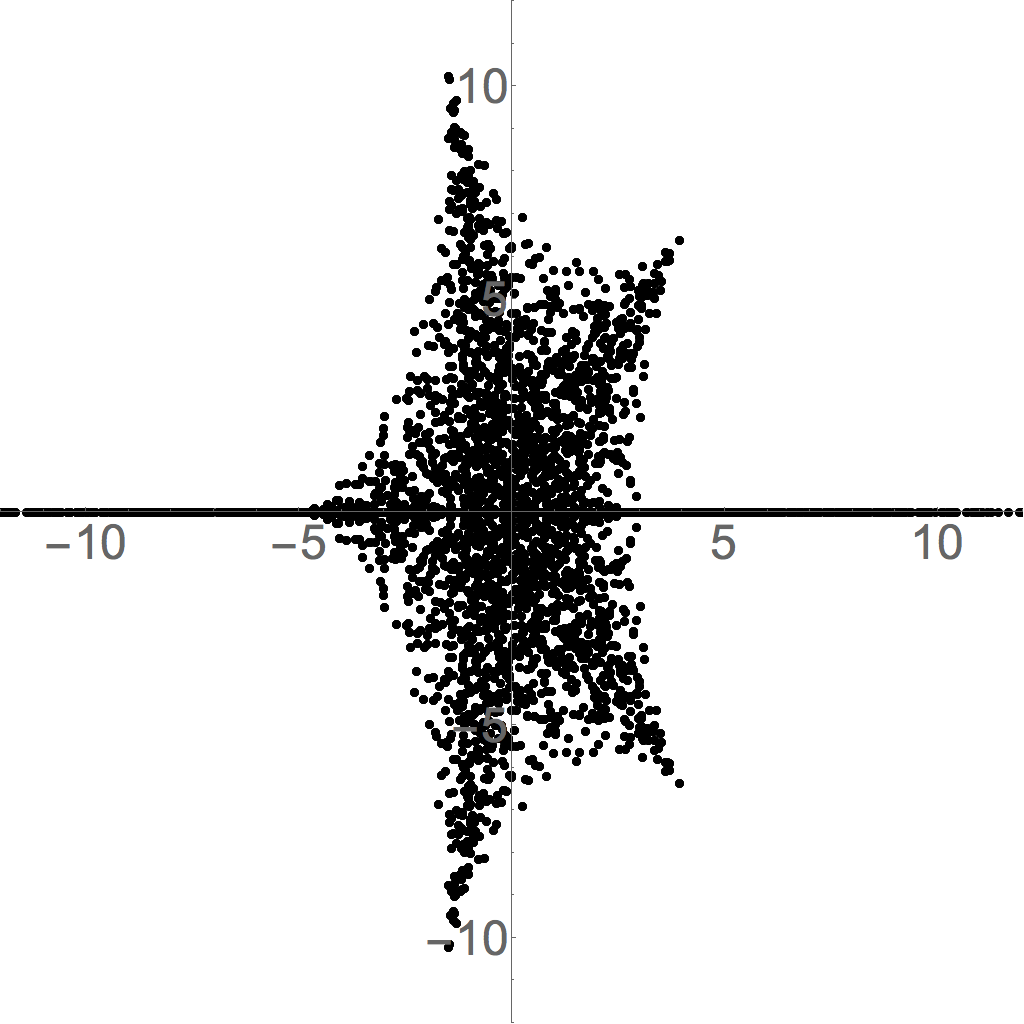}
\caption{\scriptsize $(5,13291,8142)$}
\end{subfigure}
\quad
\begin{subfigure}[b]{0.3\textwidth}
\includegraphics[width=\textwidth]{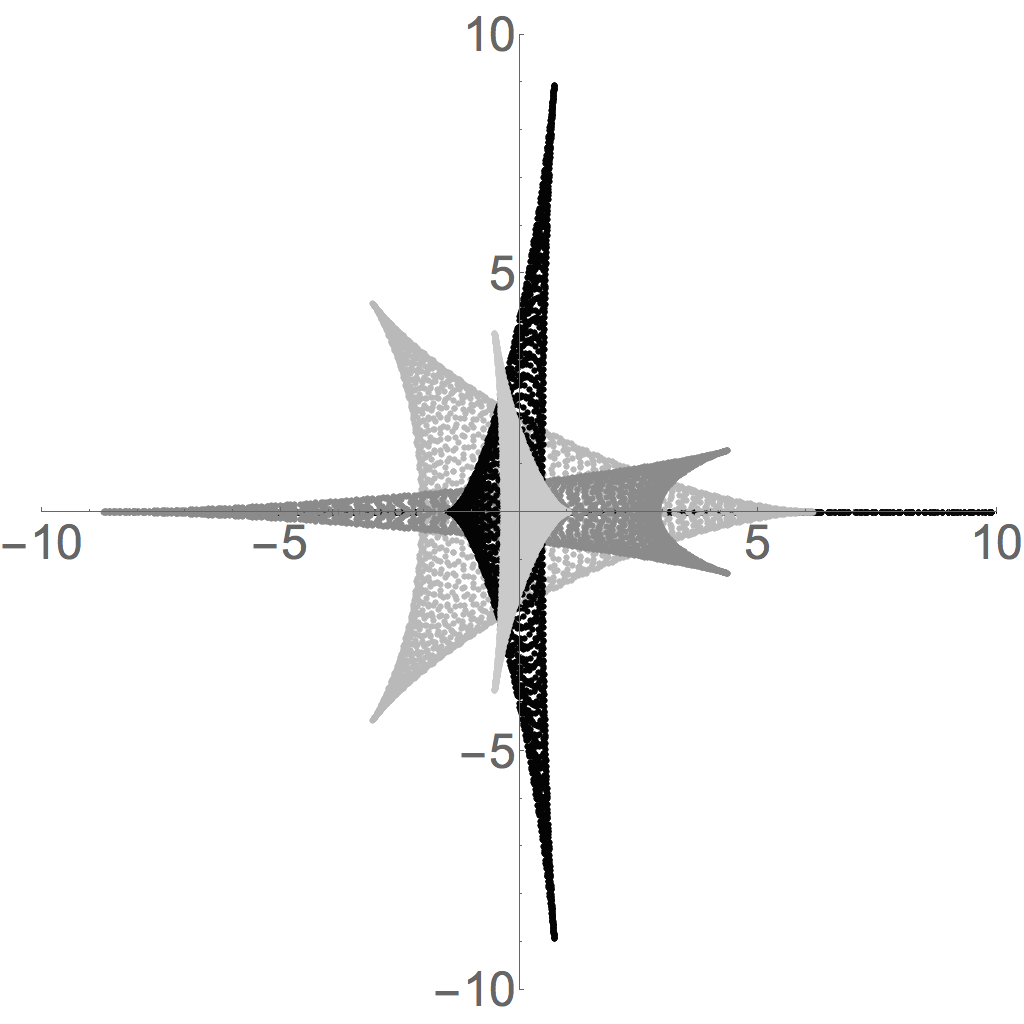}
\caption{\scriptsize $(17,6493,27213)$}
\label{stretchhypofig2}
\end{subfigure}
\quad
\begin{subfigure}[b]{0.3\textwidth}
\includegraphics[width=\textwidth]{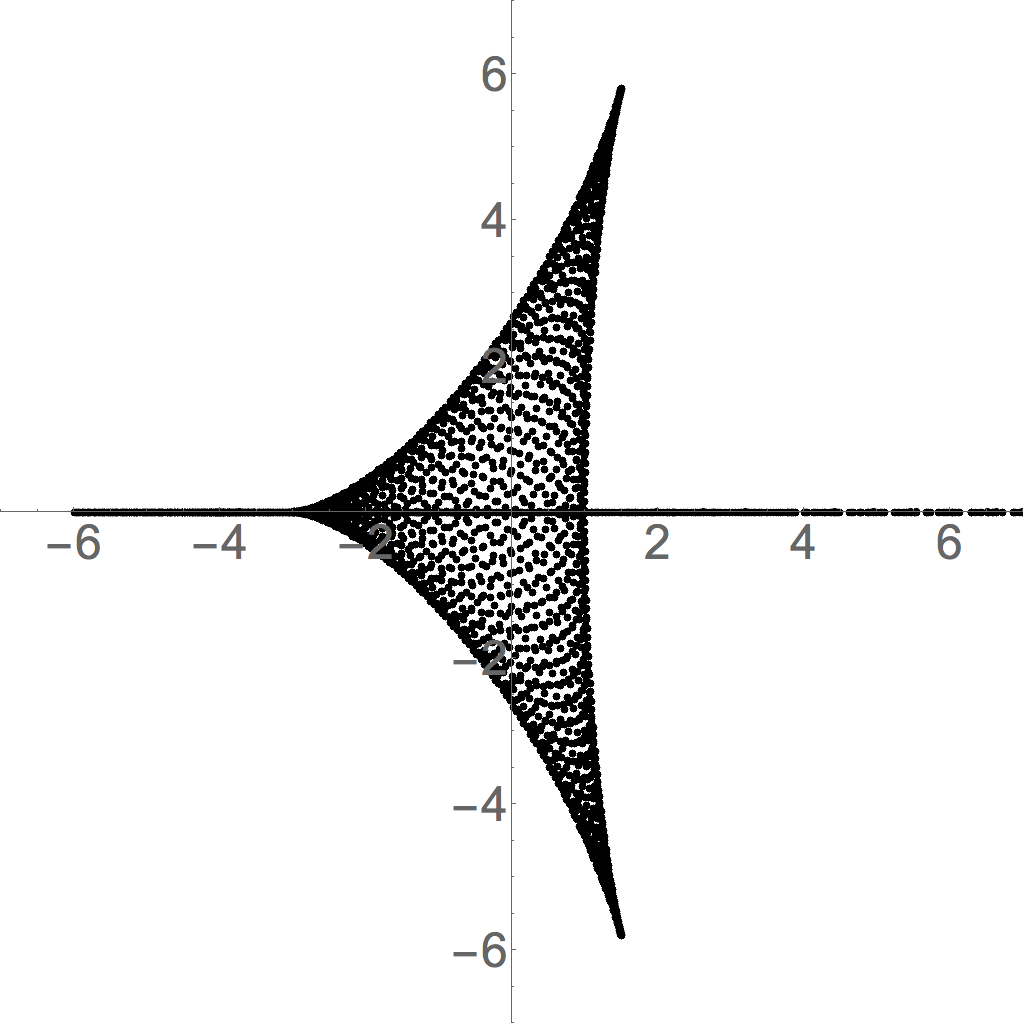}
\caption{\scriptsize $(5,6247,2317)$}
\end{subfigure}
\caption{For the given triples $(p,n,\omega)$, the images of the cyclic supercharacters $\sigma_\omega$ mod $pn$ are explained by Theorem \ref{ellipsethm}. See Section \ref{ellipsesec} for details.}
\label{stretchhypofig}
\end{figure}

On the other hand, if $v=1$ for any $n$, ellipses emerge. For the remainder of the subsection, suppose that $v=1$. We see that
\begin{equation*}
\sigma_\omega(sp+tn)
=\frac{2}{r u}\sum_{j=1}^{u/2}
\begin{pmatrix}
g_{r u/2}(\omega^j t)-1&0\\
g_{r u}(\omega^j t)-g_{r u/2}(\omega^j t)&0
\end{pmatrix}
e\left(\frac{s}{n}\right).
\end{equation*}
Here, $\sigma_\omega(sp+tn)$ belongs to a Minkowski sum of ellipses in standard form:
\begin{equation*}
\bigoplus_{j=1}^{u/2} \left\{z\in \C : \frac{\Re(z)^2}{(g_{r u/2}(\omega^j t)-1)^2}
+ \frac{\Im(z)^2}{(g_{r u}(\omega^j t)-g_{r u/2}(\omega^j t))^2} = 1\right\}.
\end{equation*}

Take, for example, the case $u=2$ and $r=1$. In this situation, the nontrivial layer of $\sigma_\omega$ mod $p$ is contained in the ellipse with equation $\Re(z)^2+\Im(z)^2/p=1$. This behavior, depicted by Figures \ref{ellipsefig1} and \ref{ellipsefig3}, was first noted in \cite[Proposition 5.2]{duke2015}, but the framework here is more general. In particular, it is apparent now that such examples are more common than previously thought. Figure \ref{ellipsefig2} illustrates the case $u=r=2$ and, accordingly, features $r=2$ distinct ellipses.

\begin{figure}[ht]
\begin{subfigure}[b]{0.3\textwidth}
\includegraphics[width=\textwidth]{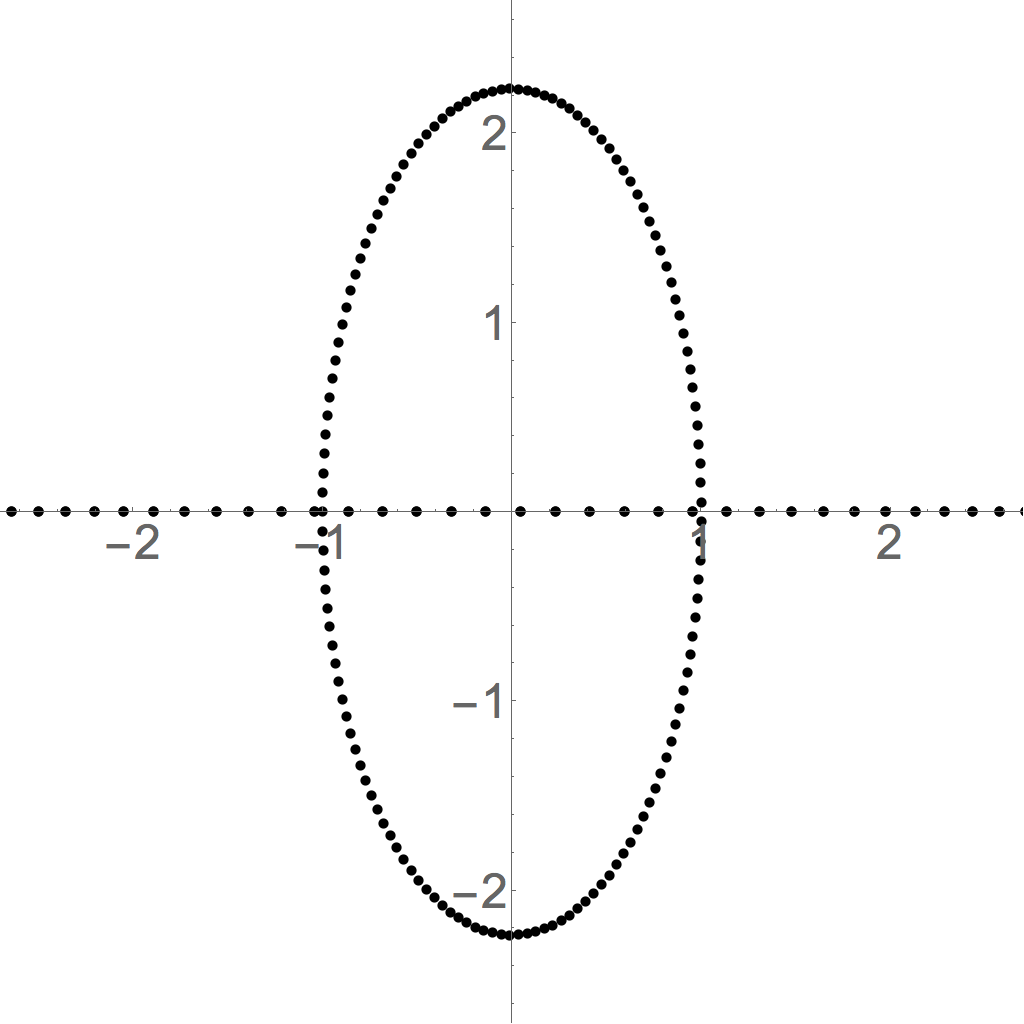}
\caption{\scriptsize $(5,137,273)$}
\label{ellipsefig1}
\end{subfigure}
\quad
\begin{subfigure}[b]{0.3\textwidth}
\includegraphics[width=\textwidth]{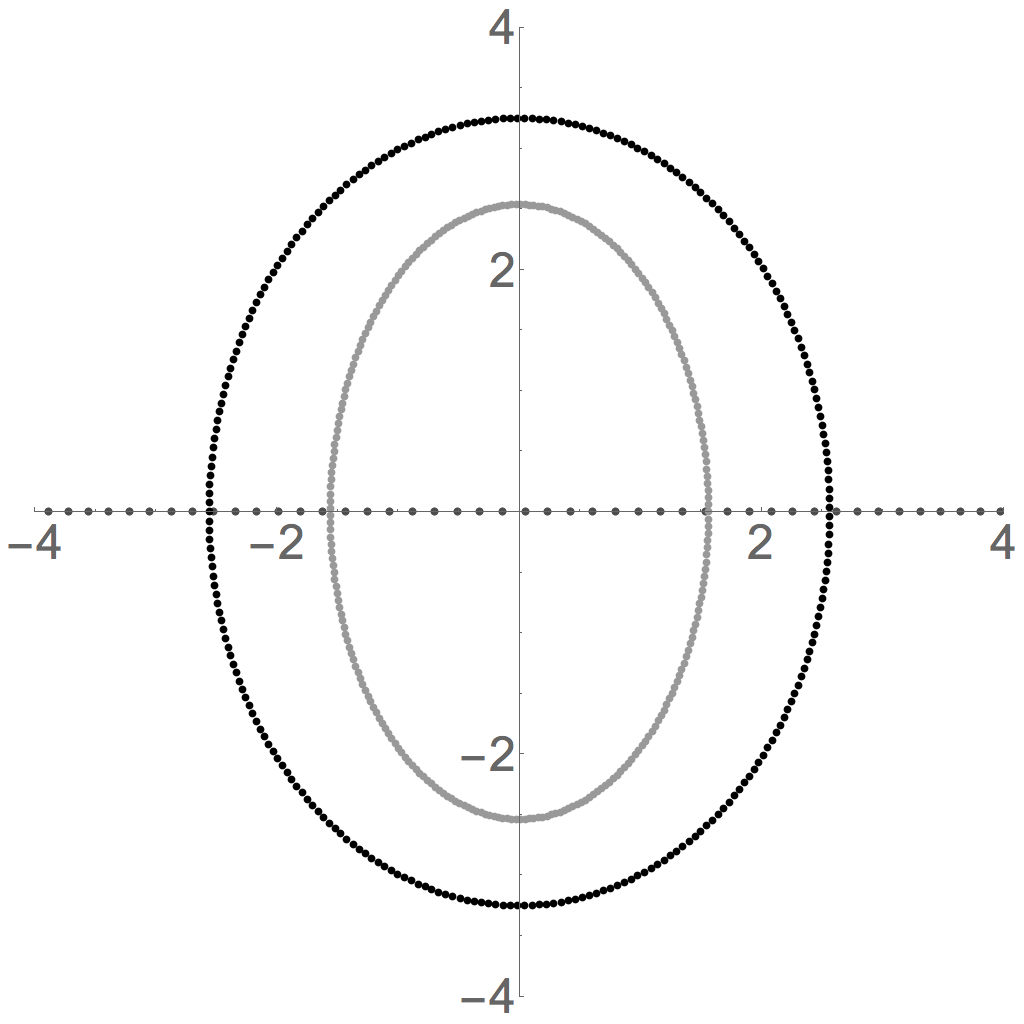}
\caption{\scriptsize $(17,269,1613)$}
\label{ellipsefig2}
\end{subfigure}
\quad
\begin{subfigure}[b]{0.3\textwidth}
\includegraphics[width=\textwidth]{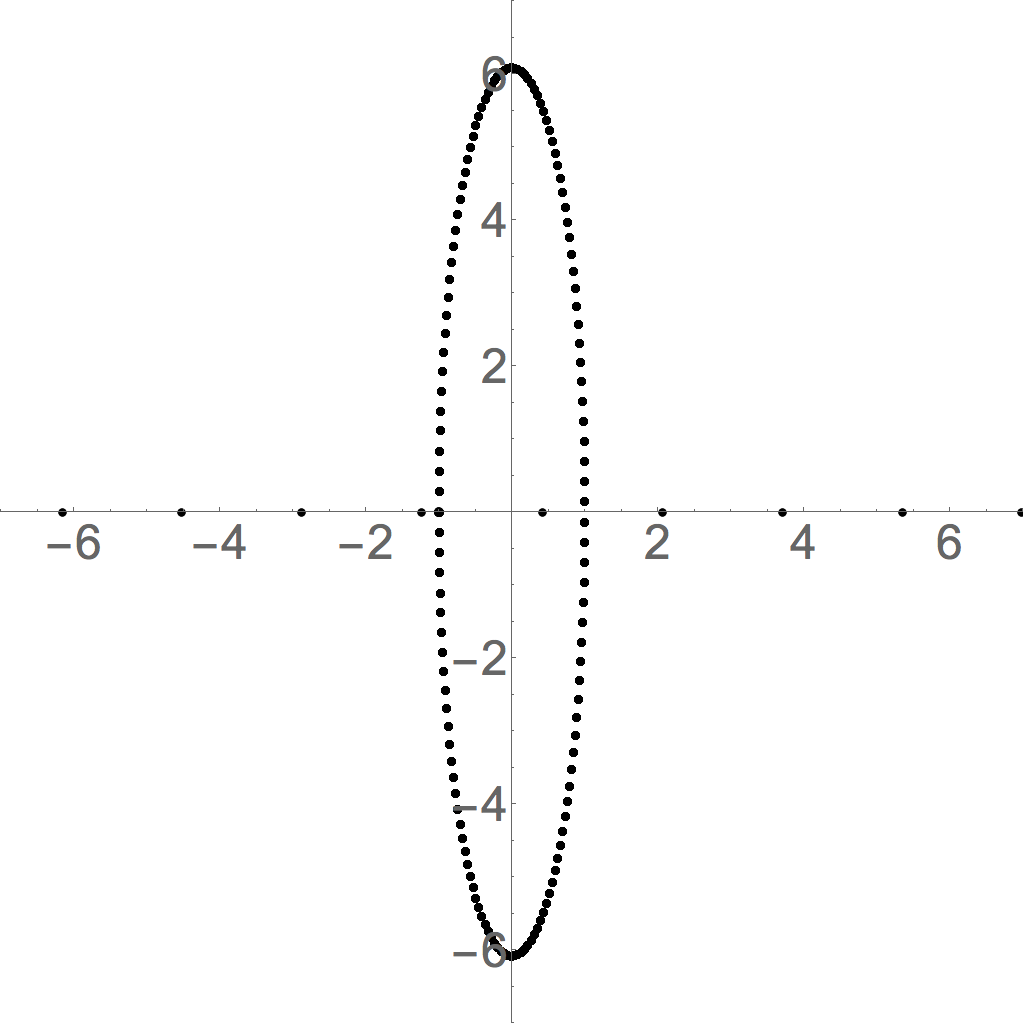}
\caption{\scriptsize $(37,137,684)$}
\label{ellipsefig3}
\end{subfigure}
\caption{For the given triples $(p,n,\omega)$, discretized versions of ellipses appear in the plots of cyclic supercharacters $\sigma_\omega$ mod $pn$. See Section \ref{ellipsesec} for details.}
\label{ellipsefig}
\end{figure}

Suppose now that $u=4$ and $r=1$, and that $\chi$ is a character mod $p$ of order 4. It can be shown that each nontrivial layer of $\sigma_\omega$ mod $p$ is contained in the image of the $\lcm(2,n)$-th roots of unity under the map
\begin{equation*}
z\mapsto \begin{pmatrix}\frac{1}{2} (\sqrt{p}-1)&0\\0&\Re(G(\chi))\end{pmatrix}z + \begin{pmatrix}\frac{1}{2} (\sqrt{p}+1)&0\\0&\Im(G(\chi))\end{pmatrix}z^\omega,
\end{equation*}
which can be rewritten to reflect the fact that $|G(\chi)|=\sqrt{p}$. The image in question is most easily visualized as the path of a point winding $\omega$ times around an ellipse whose center travels once around another ellipse. Figures \ref{windingfig1} and \ref{windingfig3} depict this behavior, while Figure \ref{windingfig2}, which appeared in \cite{duke2015} unexplained, illustrates the case $u=4$ and $r=2$.

\begin{figure}[ht]
\begin{subfigure}[b]{0.3\textwidth}
\includegraphics[width=\textwidth]{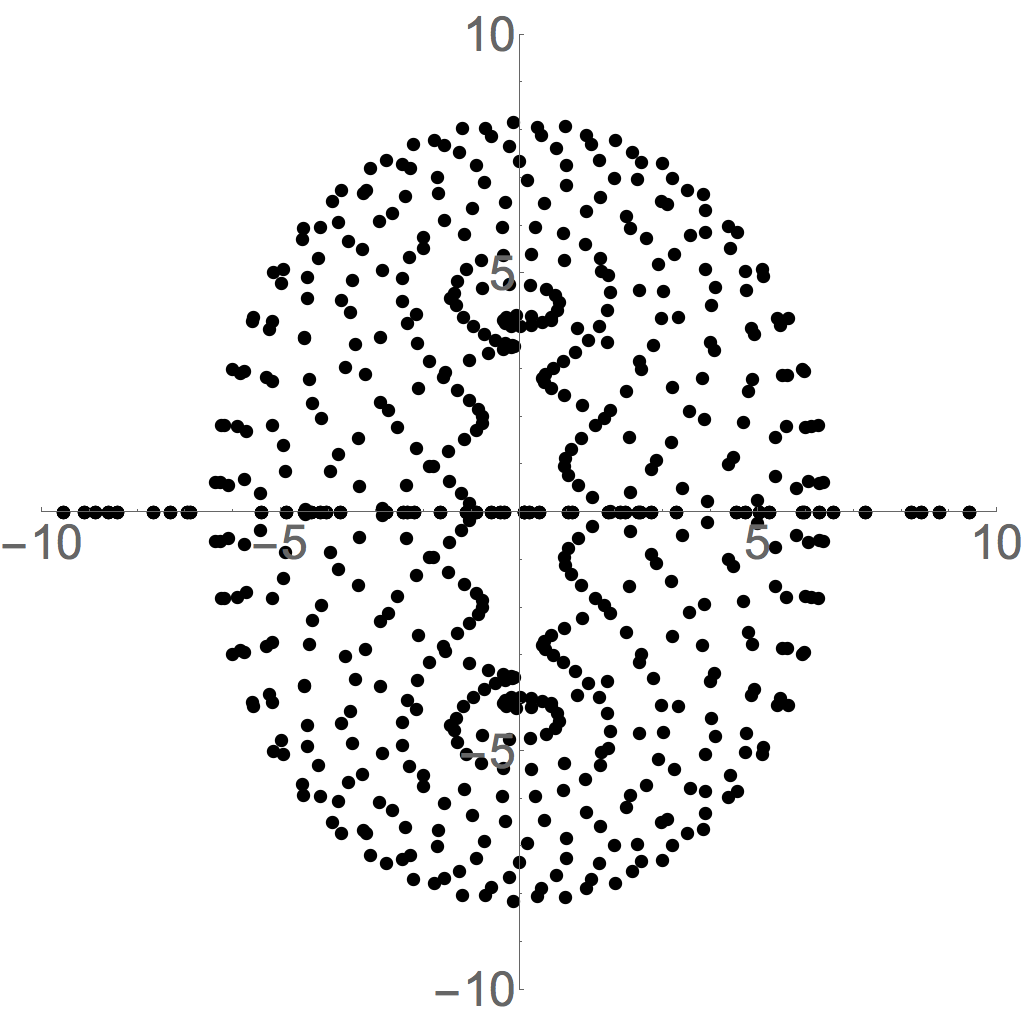}
\caption{\scriptsize $(41,541,52)$}
\label{windingfig1}
\end{subfigure}
\quad
\begin{subfigure}[b]{0.3\textwidth}
\includegraphics[width=\textwidth]{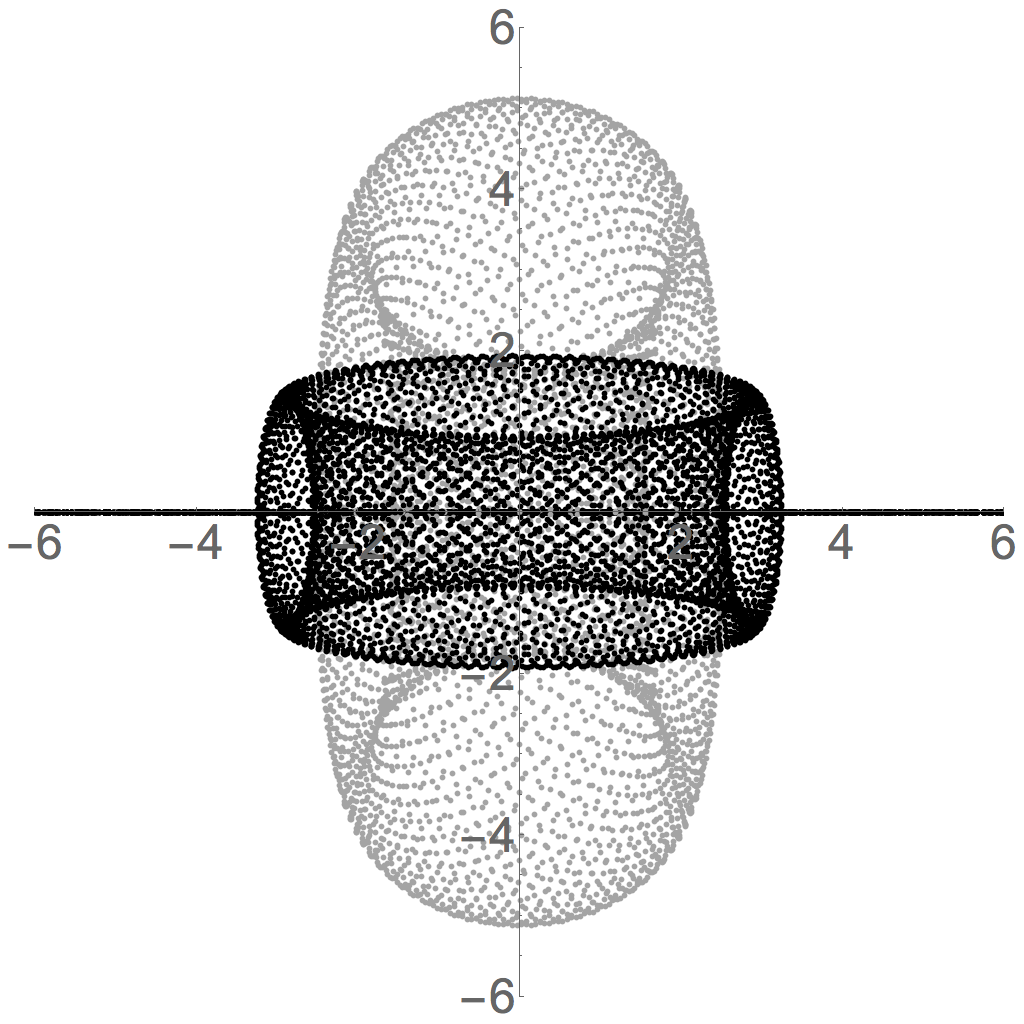}
\caption{\scriptsize $(17,5365,2337)$}
\label{windingfig2}
\end{subfigure}
\quad
\begin{subfigure}[b]{0.3\textwidth}
\includegraphics[width=\textwidth]{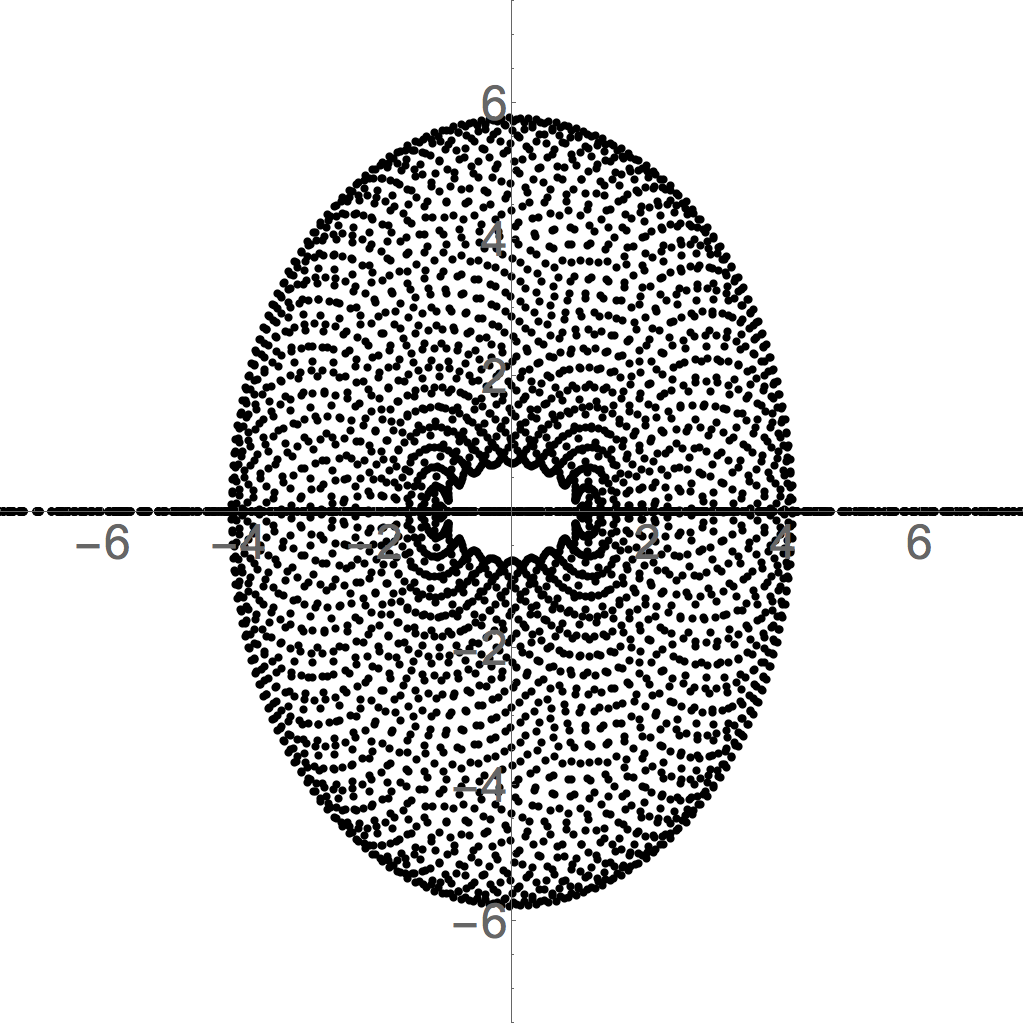}
\caption{\scriptsize $(17,3581,364)$}
\label{windingfig3}
\end{subfigure}
\caption{For the given triples $(p,n,\omega)$, the ovate figures contained in the plots of the cyclic supercharacters $\sigma_\omega$ mod $pn$ are the effect of one ellipse ``winding around'' another. See Section \ref{ellipsesec} for details.}
\end{figure}

Returning to the case $u=2$, suppose now that $r$ is maximal, i.e., $r = \frac{\phi(p)}{4}$. Each of the $r$ nontrivial layers of $\sigma_\omega$ is the image of the set of $\lcm(2,n)$-th roots of unity under the $\R$-linear map with matrix
\begin{equation*}
\begin{pmatrix}
\cos(2\pi t/n)+\cos(2\pi \omega t/n)&0\\
0&\cos(2\pi t/n)-\cos(2\pi \omega t/n)
\end{pmatrix},
\end{equation*}
for some $t$ coprime to $p$. This image, in turn, belongs to an ellipse whose semimajor and semiminor axes sum to at most 4. The envelope of the family of all such ellipses is the boundary of $H_4$. Accordingly, for large $p$, plots of these cyclic supercharacters tend to resemble $H_4$. Figure \ref{astroidfig} presents several examples.

\begin{figure}[ht]
\begin{subfigure}[b]{0.3\textwidth}
\includegraphics[width=\textwidth]{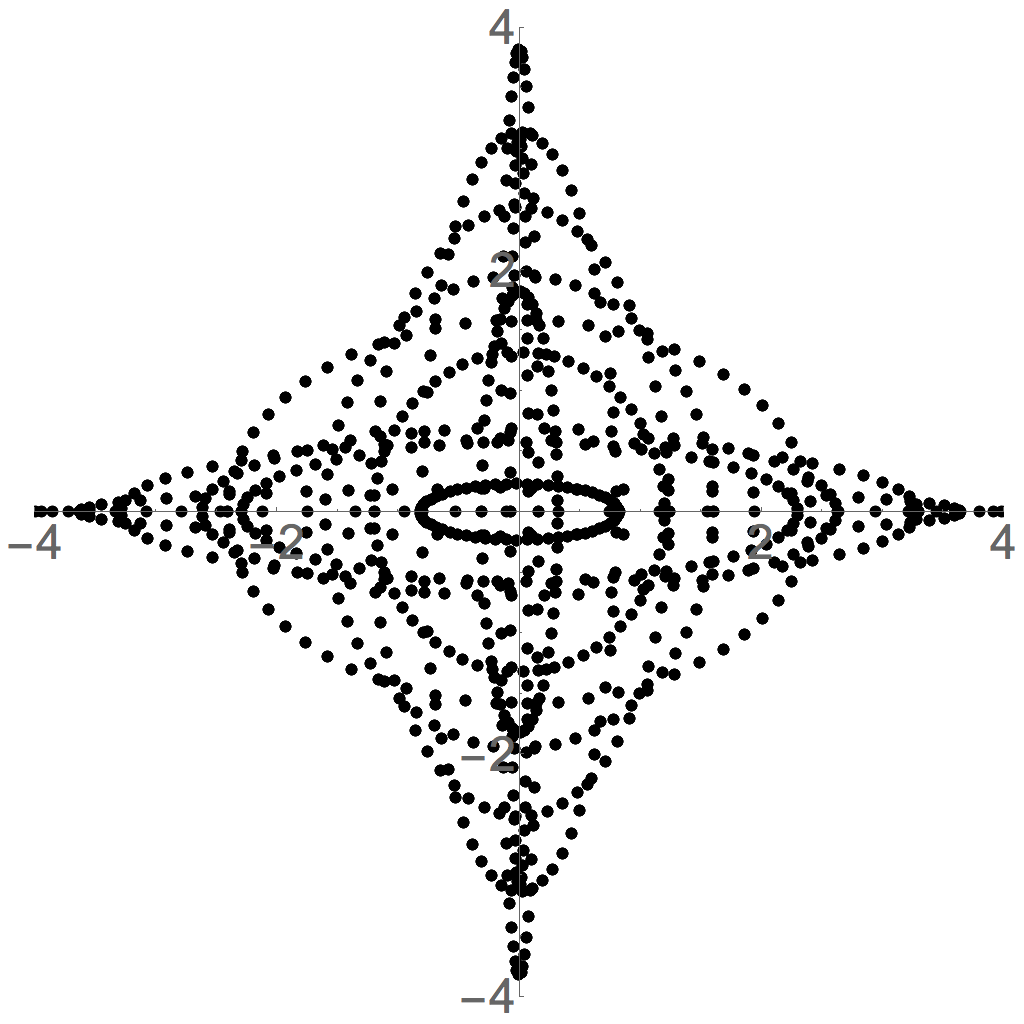}
\caption{\scriptsize $(59,53,235)$}
\end{subfigure}
\quad
\begin{subfigure}[b]{0.3\textwidth}
\includegraphics[width=\textwidth]{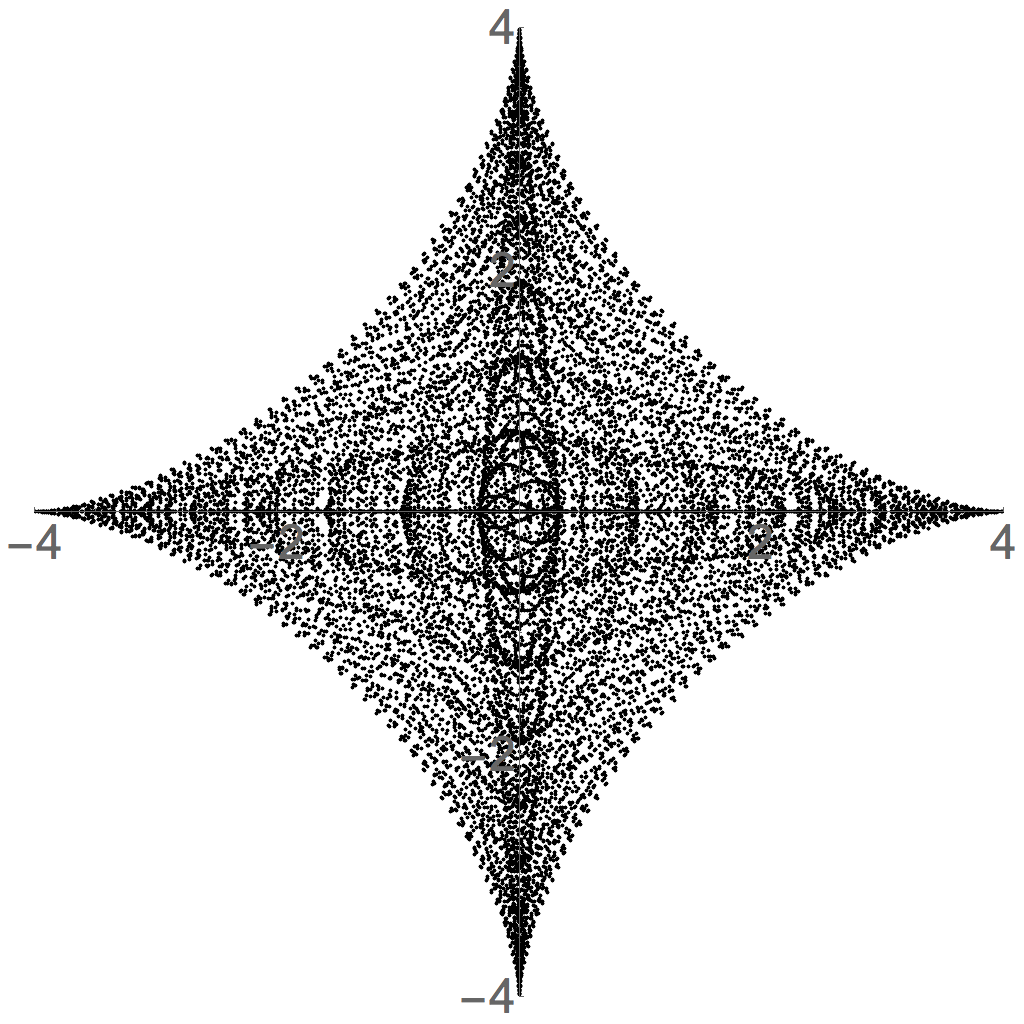}
\caption{\scriptsize $(19,3617,1234)$}
\end{subfigure}
\quad
\begin{subfigure}[b]{0.3\textwidth}
\includegraphics[width=\textwidth]{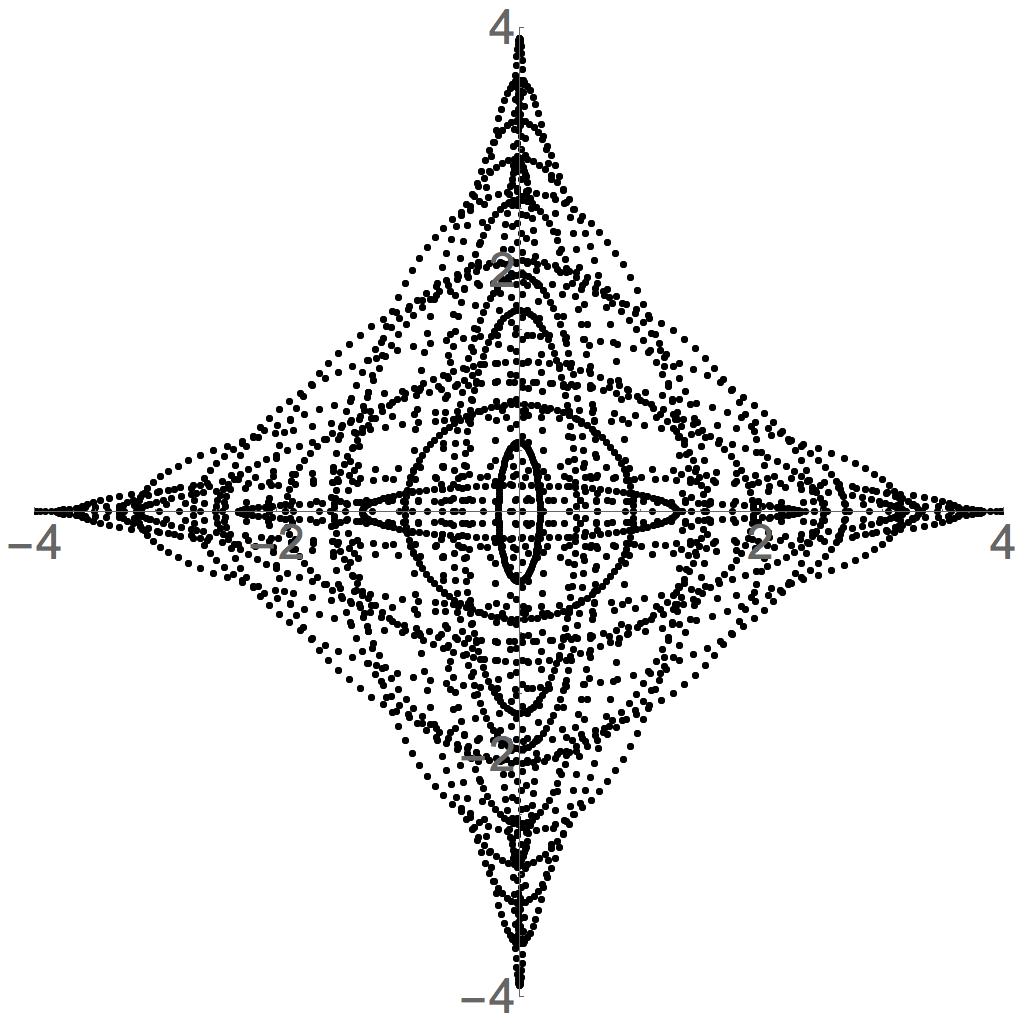}
\caption{\scriptsize $(107,109,1711)$}
\end{subfigure}
\caption{For the given triples $(p,n,\omega)$, the plots of the cyclic supercharacters $\sigma_\omega$ mod $pn$ contain discretized ellipses within $H_4$.}
\label{astroidfig}
\end{figure}

\subsection{Rhombi}
\label{rhomsec}

For this section, we assume the hypotheses of Theorem \ref{rhomthm}, and additionally that $u=4$ and $v=1$. A routine computation gives $g_{2r}(\omega t)=\overline{g_{2r}(t)}$ for all $t$, so
\begin{equation*}
\sigma_\omega(sp+tn)=\frac{1}{r}
\begin{pmatrix}
\Re(g_{2r}(t))-1&0\\
\Im(g_{2r}(t))&0
\end{pmatrix}
e\left(\frac{s}{n}\right)+\frac{1}{r}
\begin{pmatrix}
\Re(g_{2r}(t))-1&0\\
-\Im(g_{2r}(t))&0
\end{pmatrix}
e\left(\frac{\omega s}{n}\right).
\end{equation*}
We claim that scaling the real and imaginary parts of $\sigma_\omega(sp+tn)$ by factors dependent only on $t$ and rotating counterclockwise by $\frac{\pi}{2}$ about the origin yields a point with real and imaginary parts each in the interval $[-1,1]$. Indeed, consider the $\R$-linear map on $\C$ with matrix
\begin{equation*}
T=
\begin{pmatrix}
\frac{\sqrt{2}}{2}&-\frac{\sqrt{2}}{2}\\
\frac{\sqrt{2}}{2}&\frac{\sqrt{2}}{2}
\end{pmatrix}
\begin{pmatrix} 
\frac{\sqrt{2}}{r} (\Re(g_{2r}(t))-1)&0\\
0&\frac{\sqrt{2}}{r}\Im(g_{2r}(t))
\end{pmatrix}^{-1},
\end{equation*}
and notice that
\begin{equation*}
T\sigma_\omega(sp+tn)=
\begin{pmatrix}
1&0\\
0&0
\end{pmatrix}
e\left(\frac{s}{n}\right)
+\begin{pmatrix}
0&0\\
1&0
\end{pmatrix}
e\left(\frac{\omega s}{n}\right).
\end{equation*}
It follows that each nontrivial layer of $\sigma_\omega$ mod $p$ is contained in the convex hull of a rhombus in $\C$ with vertices at $\pm \frac{2}{r} (\Re(g_{2r}(t))-1)$ and $\pm i\frac{2}{r} \Im(g_{2r}(t))$ for some $t$ coprime to $p$. Plots of these cyclic supercharacters appear in both \cite{duke2015} and \cite{garcia2015}. In case $r=1$, as in Figures \ref{rhomfig1} and \ref{rhomfig3}, the vertices of the sole rhombus are at $\pm2$ and $\pm 2i\sqrt{p}$. Figure \ref{rhomfig2} illustrates the case $r=5$.

\begin{figure}[ht]
\begin{subfigure}[b]{0.3\textwidth}
\includegraphics[width=\textwidth]{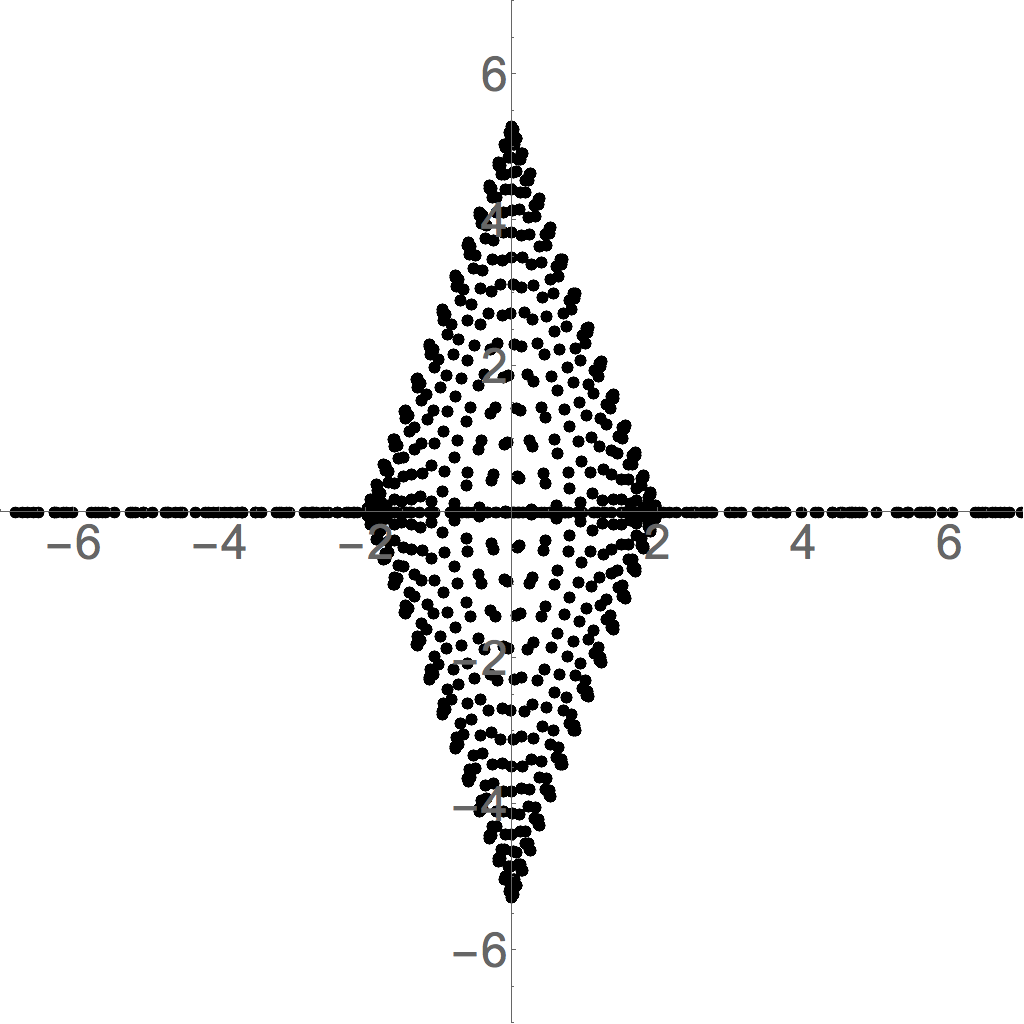}
\caption{\scriptsize $(7,1229,3055)$}
\label{rhomfig1}
\end{subfigure}
\quad
\begin{subfigure}[b]{0.3\textwidth}
\includegraphics[width=\textwidth]{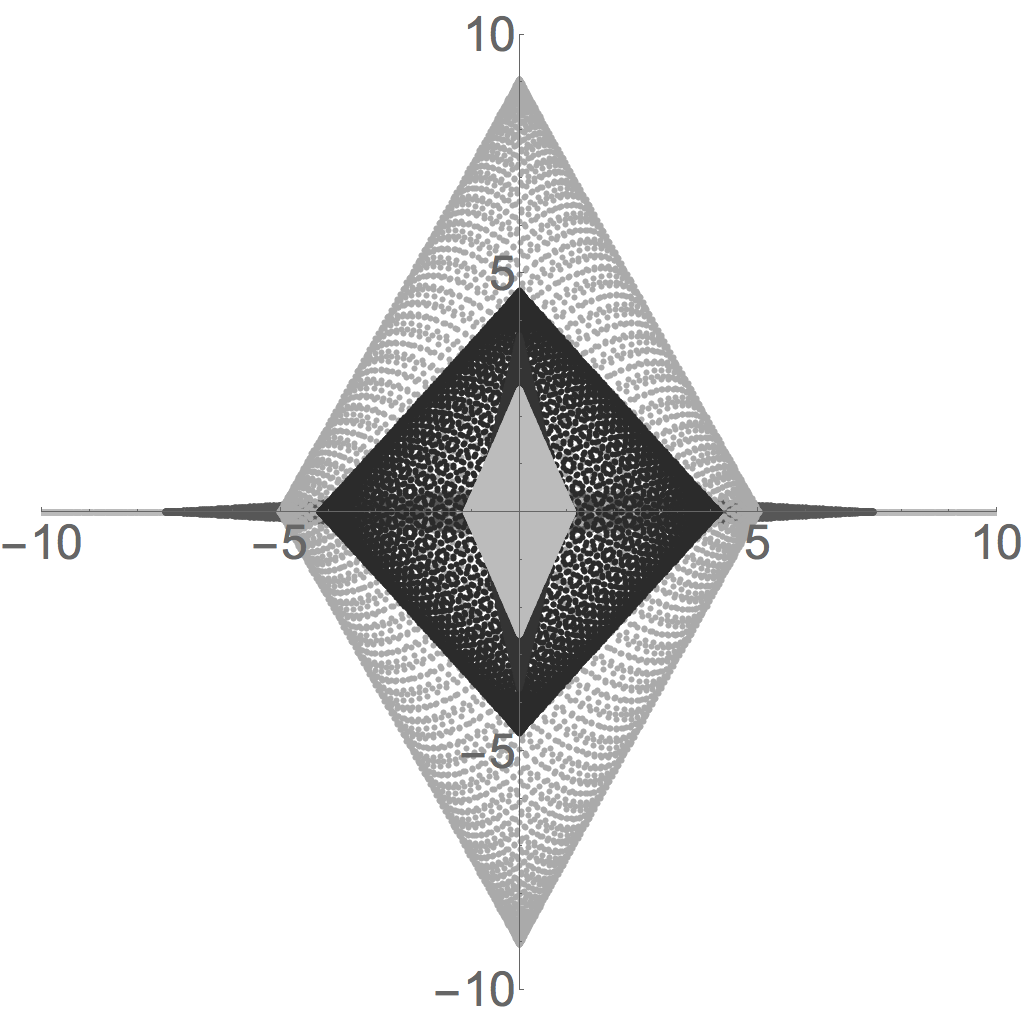}
\caption{\scriptsize $(31,11849,24527)$}
\label{rhomfig2}
\end{subfigure}
\quad
\begin{subfigure}[b]{0.3\textwidth}
\includegraphics[width=\textwidth]{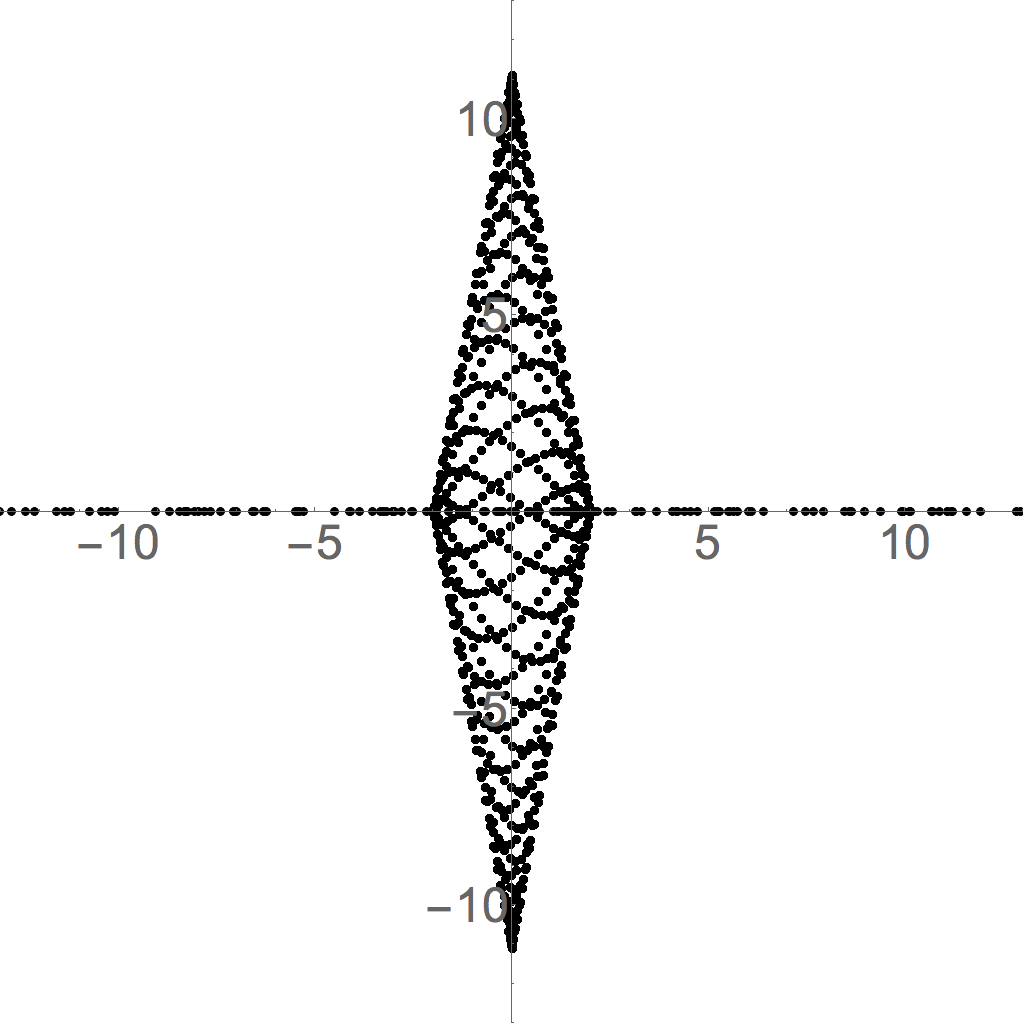}
\caption{\scriptsize $(31,1429,809)$}
\label{rhomfig3}
\end{subfigure}
\caption{For the given triples $(p,n,\omega)$, the nontrivial layers of the cyclic supercharacters $\sigma_\omega$ mod $pn$ are contained in rhombi. See Section \ref{rhomsec} for details.}
\end{figure}

\section{The present unknown}

While Theorem \ref{shapethm} provides concrete explanations of certain graphical behaviors, many remain elusive. It seems likely, however, that more could be handled in similar fashion to the ones above, armed with Theorem \ref{splitthm} and the language of Minkowski addition and multiplication. To close, we present Figure \ref{mysteryfig}, which provides a small gallery of plots yet unexplained. In Figure \ref{mysteryfig1}, the nontrivial layers appear to be contained in Minkowski sums of $3$ line segments. The nontrivial layers in Figure \ref{mysteryfig2} suggest Minkowski sums of ellipses, as in Section \ref{ellipsesec}. Patterns resembling the one in Figure \ref{mysteryfig3}, where $n=524287$ and $\omega=2$, seem to occur whenever $n$ has the form $2^j-1$ and $\omega=2$. We leave the reader with these observations.

\begin{figure}[ht]
\begin{subfigure}[b]{0.3\textwidth}
\includegraphics[width=\textwidth]{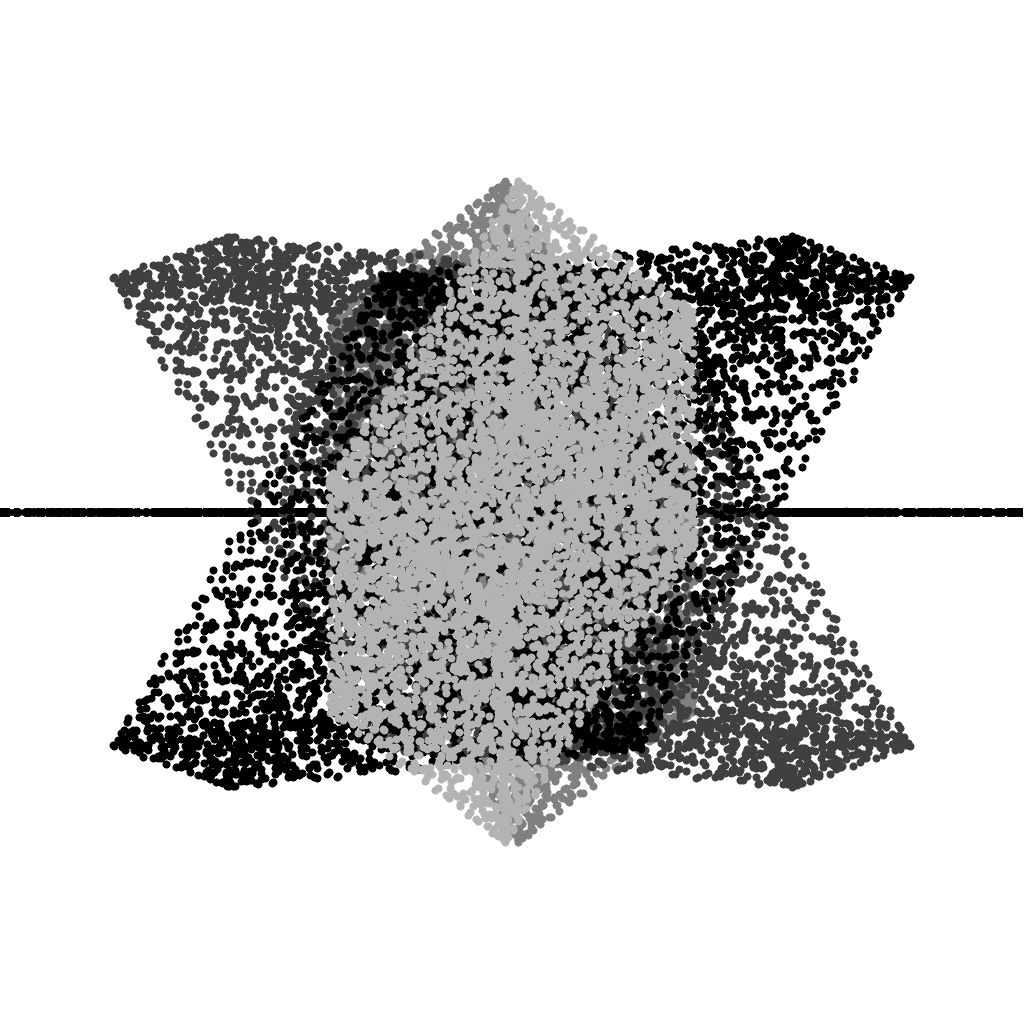}
\caption{\scriptsize $n=398157$, $\omega=1070$}
\label{mysteryfig1}
\end{subfigure}
\quad
\begin{subfigure}[b]{0.3\textwidth}
\includegraphics[width=\textwidth]{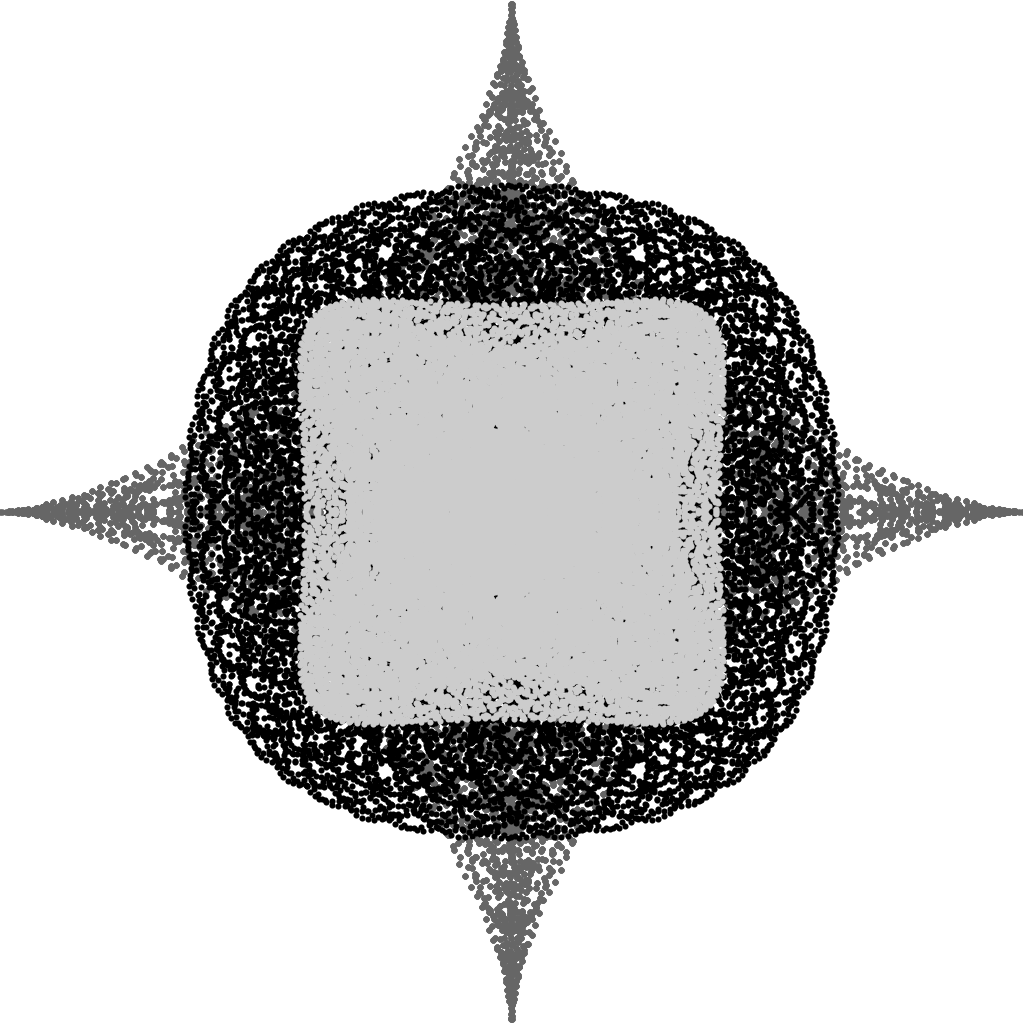}
\caption{\scriptsize $n=546975$, $\omega=593$}
\label{mysteryfig2}
\end{subfigure}
\quad
\begin{subfigure}[b]{0.3\textwidth}
\includegraphics[width=\textwidth]{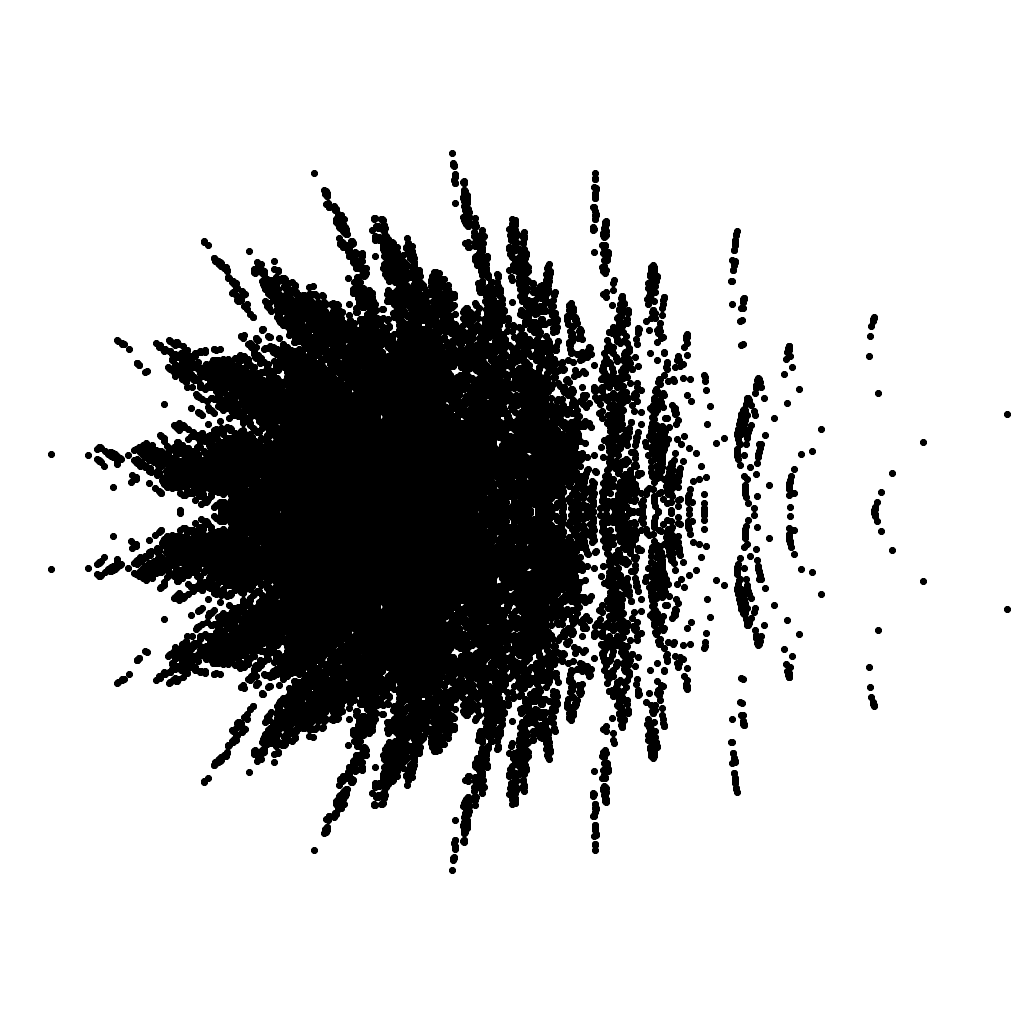}
\caption{\scriptsize $n=524287$, $\omega=2$}
\label{mysteryfig3}
\end{subfigure}
\caption{The plots of these cyclic supercharacters $\sigma_\omega$ mod $n$ have yet to be explained.}
\label{mysteryfig}
\end{figure}

\section*{Appendix}

We dedicate this section to proving Proposition \ref{dihprop}. For $A\subset \C$, define the \emph{backward cone} of $A$ to be the set $C(A)$ given by
\begin{equation*}
C(A)=\{ \lambda z \in \C : \lambda \in [0,1]\mbox{ and }z\in A\}.
\end{equation*}
If $A$ is compact, we define its \emph{outer boundary} $\partial(A)$ by
\begin{equation*}
\partial(A)=\{z\in A : A\cap \{\lambda z : \lambda > 1\}=\emptyset\}.
\end{equation*}
Notice that if $A$ is compact, then $\partial(A)=\partial(C(A))$, and that if $B$ is also compact, then $\partial(A\otimes B)\subset\partial(\partial(A)\otimes \partial(B))$, with equality if $A$ and $B$ are star shaped with center 0.

\begin{proof}[Proof of Proposition \ref{dihprop}]
Let $E_k$ (resp., $E_\prm$) be the edge of the polygon $\partial(P_k)$ (resp., $\partial(P_\prm)$) perpendicular to the real axis. By the preceding discussion, we see that
\begin{align}
\partial(P_k\otimes P_\prm)&=\partial(P_k)\otimes \partial(P_\prm)\nonumber\\
&= \partial(\{e(\tfrac{j}{k\prm}) : j=1,\ldots,k\prm\}\otimes (E_k\otimes E_\prm))\nonumber\\
&\subset \{e(\tfrac{j}{k\prm}) : j=1,\ldots,k\prm\}\otimes \partial(E_k\otimes E_\prm)\nonumber\\
&=\{e(\tfrac{j}{k\prm}) : j=1,\ldots,k\prm\}\otimes \partial(C(E_k\otimes E_\prm)).
\label{boundeq}
\end{align}
Let $a_k=-k\cos \frac{\pi}{k}$ (resp., $a_\prm=-\prm\cos\frac{\pi}{\prm}$), so that $a_k^{-1}E_k$ (resp., $a_\prm^{-1}E_\prm$) is the line segment connecting $1\pm i\tan\frac{\pi}{k}$ (resp., $1\pm i\tan\frac{\pi}{\prm}$). By \cite[Theorem 2.4(c)]{li2015}, $C(a_k^{-1}E_k\otimes a_\prm^{-1}E_\prm)$ is the set illustrated in Figure \ref{hexfig}, where 
\begin{align*}
z_1&=1+\tan^2 \tfrac{\pi}{\prm}\\
z_2&=1+\tan\tfrac{\pi}{k}\tan\tfrac{\pi}{\prm}+i(\tan\tfrac{\pi}{k}-\tan\tfrac{\pi}{\prm})\\
z_3&=1-\tan\tfrac{\pi}{k}\tan\tfrac{\pi}{\prm}+i(\tan\tfrac{\pi}{k}+\tan\tfrac{\pi}{\prm}).
\end{align*}
Since $C(E_k\otimes E_\prm)=a_ka_\prm C(a_k^{-1}E_k\otimes a_\prm^{-1}E_\prm)$, the result follows from combining standard trigonometric identities with \eqref{boundeq} and \eqref{diheq}.
\end{proof}

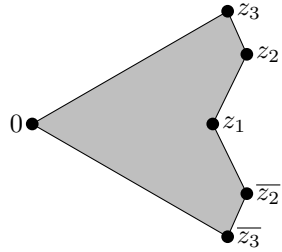
\begin{figure}[ht]
\begin{tikzpicture}[scale=3]
\coordinate [label=left:$0$] (A) at (0,0);
\coordinate (C) at (1,0);
\coordinate [label=right:$z_1$] (B) at (0.8,0);
\coordinate [label=right:$\overline{z_3}$] (E) at (0.866,-0.5);
\coordinate [label=right:$z_3$] (D) at (0.866,0.5);
\coordinate [label=right:$z_2$] (F) at (0.951,0.309);
\coordinate [label=right:$\overline{z_2}$] (G) at (0.951,-0.309);
\draw [fill=lightgray] (A) -- (D) -- (F) -- (B) -- (G) -- (E) -- (A);
\foreach \point in {A,B,D,E,F,G}
\fill [black] (\point) circle (0.8pt);
\end{tikzpicture}
\caption{The set $C(a_k^{-1}E_k\otimes a_\prm^{-1}E_\prm)$ defined in the proof of Proposition \ref{dihprop}.}
\label{hexfig}
\end{figure}

\section*{Acknowledgments}

The author thanks Trevor Hyde and the referee for their constructive suggestions and attention to detail.

\bibliographystyle{amsplain}
\bibliography{GCSFCM}
\end{document}